\setlist[itemize]{noitemsep} % Make itemize lists more compact
\numberwithin{equation}{section}
\theoremstyle{plain}
\newtheorem{theorem}{Theorem}[section]
\newtheorem{lemma}{Lemma}[section]
\newcommand{\N}{\mathrm{N}}
\renewcommand{\P}{\mathrm{P}}
\newcommand{\E}{\mathrm{E}}
\newcommand{\LL}{\mathbb{L}}
\newcommand{\trans}{^{\tiny{\mathrm{T}}}}
\newcommand{\Ind}{\mathbbm{1}}
\begin{document} 
	
\begin{frontmatter}
	\title{Convergence Rates for Bayesian Estimation and Testing in Monotone Regression}
	\runtitle{Bayesian Monotone Regression}
	
	\begin{aug}
		\author{\fnms{Moumita} \snm{Chakraborty}\ead[label=e1]{mchakra@ncsu.edu}}
		\address{Department of Operations Research \\
			North Carolina State University\\
			Raleigh, NC 27695\\
			U.S.A.\\
			\printead{e1}}
		\and
		\author{\fnms{Subhashis} \snm{Ghosal}\thanksref{t1}\ead[label=e2]{sghosal@stat.ncsu.edu}}
		\address{Department of Statistics\\
			North Carolina State University\\
			Raleigh, NC 27695\\
			U.S.A.\\
			\printead{e2}}
	
		\runauthor{Chakraborty and Ghosal}
		\thankstext{t1}{Research is partially supported by NSF grant number DMS-1916419.}
	\end{aug}

\begin{abstract}
  Shape restrictions such as monotonicity on functions often arise naturally in statistical modeling. 
  We consider a Bayesian approach to the problem of estimation of a monotone regression function and testing for monotonicity. We construct a prior distribution using piecewise constant functions. For estimation, a prior imposing monotonicity of the heights of these steps is sensible, but the resulting posterior is harder to analyze theoretically. We consider a ``projection-posterior'' approach, where a conjugate normal prior is used, but the monotonicity constraint is imposed on posterior samples by a projection map on the space of monotone functions. We show that the resulting posterior contracts at the optimal rate $n^{-1/3}$ under the $\LL_1$-metric and at a nearly optimal rate under the empirical $\LL_p$-metrics for $0<p\le 2$. The projection-posterior approach is also computationally more convenient. We also  construct a Bayesian test for the hypothesis of monotonicity using the posterior probability of a shrinking neighborhood of the set of monotone functions. We show that the resulting test has a universal consistency property and obtain the separation rate which ensures that the resulting power function approaches one. 
\end{abstract}

\begin{keyword}
	\kwd{Monotonicity}
	\kwd{Posterior contraction}
	\kwd{Bayesian testing}
	\kwd{Projection-posterior}
\end{keyword}

\end{frontmatter}

\section{Introduction}
\label{intro}

We consider the nonparametric regression model $Y=f(X)+\varepsilon$ for a response variable $Y$ with respect to a one-dimensional predictor variable $X\in [0, 1]$ (without loss of generality) and $\varepsilon$ a mean-zero random error with finite variance $\sigma^2$. Instead of the more commonly imposed smoothness condition, 
$f$ is assumed to be a monotone increasing function on $[0, 1]$.  We observe $n$ replications $(Y_1, X_1), \ldots,(Y_n, X_n)$, where the design points $X_1, \ldots, X_n$ are either deterministic or are randomly sampled from a fixed distribution $G$. The error $\varepsilon$ is assumed to be distributed independently of the predictor $X$.

The problem has been widely studied in the frequentist literature, and is commonly known as isotonic regression. Barlow and Brunk \cite{BarlowBrunk} obtained the greatest convex minorant (GCM) of a cumulative sum diagram as the least-square estimator under the monotonicity constraint. The Pool-Adjacent-Violators Algorithm (PAVA) describes a method of successive approximation to the GCM, and is the most commonly used algorithm for isotonic regression (see Ayer et al. \cite{ayerPAVA},  Barlow et al. \cite{BarlowStatInf}, or De Leeuw et al. \cite{PAVAinR}). Brunk \cite{Brunk1973} showed that the estimated value of the regression function at a point converges at a rate $n^{-1/3}$, and evaluated its asymptotic distribution. Durot \cite{DUROTsharpAsymp} established $n^{-1/3}$ rate of convergence of the isotonic regression estimator under the $\LL_1$-metric.

A Bayesian approach to the monotone regression problem involves putting a prior on functions under the monotonicity constraint. Since step-functions can approximate monotone functions, a natural approach is to put priors on step heights under the monotonicity constraint, and possibly also on the locations and the number of intervals. For smoother sample paths, higher-order splines can be used instead of the indicator functions of intervals. Shivley \cite{ShivelyEstimation} used a mixture of constrained normal distributions as a prior for spline coefficients. 
Bayesian nonparametric methods have been developed also for other shape-constrained problems, such as monotone density and current status censoring model. Salomond \cite{Salomond2014a} established the nearly minimax rate $n^{-1/3}$ for a decreasing density using a mixture of uniform densities as a prior. 
Testing for monotonicity of a regression function has been studied in the frequentist literature by Akakpo \cite{akakpo2014}, Hall and Heckman \cite{hall2000}, Baraud et al. \cite{baraud2005}, Ghosal et al. \cite{ghosalMonotonicityTesting} and Bowman et al. \cite{bowmanTesting}. A Bayesian approach to testing monotonicity was proposed by Salomond \cite{salomond2018}. 

A difficulty with the usual Bayesian approach to isotonic regression is that the monotonicity constraint on the coefficient makes both posterior computation and study of posterior concentration with increasing sample size a lot more challenging. This is especially the case if the true regression function lies on the boundary of the set of monotone functions, since then the prior puts a relatively less mass in the neighborhood of the true regression function. A very useful approach that can still utilize the conjugacy structure is provided by a ``projection-posterior'' distribution. In this approach, the monotonicity constraint on the step size is initially ignored, so that they may be given independent normal priors, and hence the posterior distribution is also normal, allowing easy sampling, and large sample analysis of posterior concentration. Then a posterior distribution is directly induced by a projection map that projects a step function to the nearest monotone function in terms of the $\LL_1$-distance or some other metric. A similar idea based on a Gaussian process prior was used by Lin and Dunson \cite{LizhenDunson} for monotone regression. Bhaumik and Ghosal \cite{bhaumik2015bayesian,bhaumik2017efficient} used this idea of embedding in an unrestricted space and then projecting a conjugate posterior in regression models driven by ordinary differential equations. In this paper, we pursue the projection-posterior  approach and show that the resulting projection-posterior distribution concentrates at the optimal rate $n^{-1/3}$ in terms of the $\LL_1$-distance. We obtain nearly optimal posterior concentration under an empirical $\LL_p$-distance for $0<p\le 2$. We also construct a Bayesian test for the hypothesis of monotonicity based on the posterior distribution of the difference between the unrestricted posterior sample and its projection. We show that the resulting test is universally consistent, in that the Type I error probability goes to zero and the power goes to one at any fixed alternative, regardless of smoothness. For a sequence of smooth alternatives, we also compute the needed separation from the null region to obtain high power. Our proposed test is similar in spirit to Salomond's \cite{salomond2018} test in that both are based on the posterior probability of a slightly extended null region, but our use of the $\LL_1$-metric on the function or the Hellinger metric on the density of $Y$, leads to the universal consistency.      

The paper is organized as follows. In the next section, we formally introduce the modeling assumptions and the prior and describe the projection posterior approach. In Section~\ref{sec:rates}, we present results on posterior contraction rates of the projection posterior distribution. In Section~\ref{sec:testing}, we derive asymptotic properties of the proposed Bayesian tests. Proofs of the main results are given in Section~\ref{sec:proofs} and those of the auxiliary results in Section~\ref{sec:appendix}. 

\section{Model, prior and projection posterior}
\label{sec:setup}

The following notations will be used throughout the paper. Let $\boldsymbol{I}_m$ stand for the $m\times m$ identity matrix.
By $\boldsymbol{Z} \sim \mathrm{N}_J(\boldsymbol{\mu}, \boldsymbol{\Sigma})$, we mean that $\boldsymbol{Z}$ has a $J$-dimensional normal distribution with mean $\boldsymbol{\mu}$ and covariance matrix $\boldsymbol{\Sigma}$. For a vector $\bm{x}$, the Euclidean norm will be denoted by $\|\bm x\|$. The transpose of a vector $\bm x$ is denoted by $\bm x\trans$ and that of a matrix $\bm A$ is denoted by $\bm A\trans$. If $f$ is a function and $H$ a measure, the $\LL_p$-norm of $f$ is given by $\|f\|_{p,H}=(\int |f|^p dH)^{1/p}$ for $1\le p<\infty$, and the $\LL_p$ distance between two functions $f$ and $g$ is given by $d_{p,H}(f,g)=\|f-g\|_{p,H}$ for $1\le p<\infty$ and $d_{p,H}(f,g)=\int |f-g|^p dH$ for $0<p<1$. The indicator function will be denoted by $\Ind$ and $\#$ will stand for the cardinality of a finite set. 

For two sequences of real numbers $a_n$ and $b_n$, $a_n\lesssim b_n$ means that $a_n/b_n$ is bounded, $a_n \asymp b_n$ means that both $a_n\lesssim b_n$ and $b_n \lesssim a_n$, and $a_n \ll b_n$ means that $a_n/b_n \rightarrow 0$. For a random variable $Y$ and a sequence of random variables $X_n$, $X_n\rightarrow_P Y$ means that $X_n$ converges to $Y$ in $P$-probability. 

Let $\mathcal{F}$ and $\mathcal{F}_+$ respectively denote the space of real-valued measurable functions and monotone increasing functions on $[0, 1]$, and for $K>0$, let $\mathcal{F}_+(K)=\{f\in \mathcal{F}_+: |f|\le K\}$.  
For $f:[0, 1]\mapsto \mathbb{R}$ and $d$ a distance on $\mathcal{F}$, let the projection of $f$ on $\mathcal{F}_+$ be the function $f^*$ that minimizes $d(f, h)$ over $h\in\mathcal{F}_+$. The topological closure of $\mathcal{F}_+$ is denoted by $\bar{\mathcal{F}}_+$. The $\epsilon$-covering number of a set $A$ with respect to a metric $d$, denoted by $\mathcal{N}(\epsilon,A,d)$, is the minimum number of balls of radius $\epsilon$ needed to cover $A$. 

Let $G_n(x)=n^{-1} \sum_{i=1}^n \mathbbm{1}\{X_i\le x \}$, the empirical distribution of the predictors $X$. 

A prior distribution on the regression function $f$ will be given by a random step function $f(x)=\sum_{j=1}^J \theta_j \mathbbm{1} \{ x\in I_j\}$, $x\in (0,1]$  where $I_1,\ldots,I_J$ are disjoint intervals partitioning $[0,1]$ given by $I_j =(\xi_{j-1},\xi_{j}]$, $j=1,\ldots,J-1$, and $I_J=[\xi_{J-1},\xi_J]$. The knot points are $0=\xi_0<\xi_1<\ldots<\xi_{J-1}<\xi_J=1$. With a given set of $J$ knots, the corresponding collection of step functions is denoted by $\mathcal{F}_J$. The counts of these intervals are denoted by $N_j =\sum_{i=1}^n  \mathbbm{1} \{ X_i\in I_j\}$, $j=1,\ldots,J$. For the prior, $J$ or $\bm \xi=(\xi_1,\ldots,\xi_{J-1})$ or both may be given, or these may may be distributed according to a prior.  Depending on their choices, the following three types of prior distributions will be considered in this paper. 

\begin{enumerate}
	\item \textbf{Type 1 prior}: The number of steps $J$ is deterministic (will be dependent on the sample size $n$), $\xi_j=j/J$, $j=1,\ldots,J-1$. 
	\item \textbf{Type 2 prior}: The number of steps  $J$ is deterministic, 
	$$\P ((\xi_1,\ldots,\xi_{J-1})=S)=\frac{1}{\binom{n}{J-1}}, \; S\subset \{X_1,\ldots,X_n\}, \#S=J-1,$$ that is, the knots are sampled randomly without replacement from the observed values of predictor variables (only applicable for deterministic $X$ with distinct values). 
	\item \textbf{Type 3 prior}: The knots are equidistant and the number of steps $J$ is given a prior satisfying 
	\begin{equation} 
	\label{prior J}
	\exp [-b_1j(\log j)^{t_1}]  \leq \Pi(J=j) \leq \exp [-b_2j(\log j)^{t_2}]
	\end{equation} 
	for some $b_1, b_2>0$ and $0\le t_2\leq t_1\leq 1$.
\end{enumerate}

In all three cases, given $\sigma$ and $J$, the coefficients $\theta_1,\ldots,\theta_j$ are given independent normal priors $\theta_j|\sigma \sim \N (\zeta_j, \sigma^2\lambda_j^2)$, $B_1<\lambda_j<B_2$ for some $B_1, B_2>0$ and bounded $|\zeta_1|,\ldots,|\zeta_J|$. We write $\bm \Lambda=\mathrm{diag} (\lambda_1^2,\ldots,\lambda_J^2)$, the diagonal matrix with entries $\lambda_1^2,\ldots,\lambda_J^2$. Hence the prior Type 1 prior will be used to obtain optimal posterior contraction in $\LL_1$-distance, Type 2 prior for posterior contraction in terms of an empirical $\LL_2$-distance while Type 3 prior will be used for testing monotonicity against smooth alternatives of unspecified smoothness. 

The variance parameter $\sigma^2$ is either estimated by maximizing the marginal likelihood, or is given an inverse-gamma prior $\sigma^2\sim \mathrm{IG}(\beta_1,\beta_2)$ with $\beta_1>2$ and $\beta_2>0$. 

We write $\boldsymbol{Y}=(Y_1,\ldots,Y_n)\trans$, $\boldsymbol{X}=(X_1,\ldots,X_n)\trans$, $D_n= (\boldsymbol{Y}, \boldsymbol{X})$, $\boldsymbol{\varepsilon}=(\varepsilon_1,\ldots,\varepsilon_n)\trans$, $\bm B=(\!( \Ind \{X_i\in I_j\} )\! )$, an $n\times J$ matrix, and $\bm \theta=(\theta_1,\ldots,\theta_j)\trans$. Thus the model can be written as $\bm Y=\bm B \bm \theta+\bm \varepsilon$, and the prior (given $J$ and $\sigma$) as $\bm \theta|J \sim \N_J (\bm \zeta, \sigma^2\bm \Lambda)$ with $\bm \zeta=(\zeta_1,\ldots,\zeta_J)\trans$. Then $\bm \theta|(D_n,J,\sigma,\bm \xi)\sim \N_J ((\bm B\trans \bm B+\bm \Lambda)^{-1} (\bm B\trans \bm Y+\bm \Lambda^{-1}\bm \zeta), \sigma^2 (\bm B\trans \bm B+\bm \Lambda^{-1})^{-1})$, that is, $\theta_j$ are a posteriori independent with 
\begin{equation} 
\label{eq:posterior}
\theta_j |(\bm \xi,\sigma,J,D_n) \sim \N \bigg (\frac{{N_j \bar{Y}_j}+{\zeta_j}/{\lambda_j^2}}{{N_j}+{1}/{\lambda_j^2}}, \frac{\sigma^2}{{N_j}+{1}/{\lambda_j^2}}  \bigg ). 
\end{equation}
The marginal distribution of the observations $\bm Y$ (given $\bm X$ and $J,\sigma^2,\bm \xi$) is 
\begin{equation} 
\label{eq:marginal}
\boldsymbol{Y}|(\sigma, \bm \xi, J, \bm X)\sim\mathrm{N}_n\big (\boldsymbol{B\zeta},\sigma^2(\boldsymbol{B\Lambda \bm B}\trans+\boldsymbol{I}_n)\big ). 
\end{equation} 
As the coefficients $\bm \theta$ have not been restricted to the cone of monotone increasing values $\mathcal{Q}:=\{(q_1,\ldots,q_J): q_1\le q_2\le \cdots \le q_J\}$, the resulting regression function $f=\sum_{j=1}^J \theta_j \mathbbm{1}_{I_j}$ may not be monotone. In order to comply with the monotonicity restriction, a sampled value of the function $f$ from its posterior (obtained through the posterior sampling of $\bm \theta$) is projected on the set of monotone functions $\mathcal{F}_+$ on $[0,1]$ to obtain $f^*\in \mathcal{F}_+$ nearest to $f$ with respect to some distance $d$. The induced distribution of $f^*$ will be called the projection-posterior distribution. It will be denoted by $\Pi_n^*$ and will be the basis of inference on the regression function $f$. By its definition, the projection-posterior distribution is restricted to $\mathcal{F}_+$. 

We also find that the projection $f^*$ of a step function $f=\sum_{j=1}^J \theta_j \mathbbm{1}_{I_j}\in \mathcal{F}_J$ is itself a step function $f=\sum_{j=1}^J \theta_j^* \mathbbm{1}_{I_j}\in \mathcal{F}_J$, with $\theta_1^*\le \cdots \le \theta_J^*$. For the the $\LL_2(G_n)$-distance,  these values are obtained by the weighted isotonization procedure 
\begin{equation} 
\label{isotonization}
\mbox{minimize } \sum_{j=1}^J N_j(\theta_j-\theta^*_j)^2 \mbox{ subject to } \theta_1^*\leq\cdots\leq\theta_J^*.
\end{equation}
The optimizing values $\theta_1^*,\ldots,\theta_J^*$ can be computed using the PAVA and can be characterized as the left-derivative at the point $n^{-1} \sum_{k = 1}^{j}N_k$ of the greatest convex minorant of the graph of the line segments connecting the points  
$
\big\{(0,0), \big({N_{1}}/{n}, {N_{1}}\theta_1/n\big), \ldots, \big(\sum_{k = 1}^{J}{N_{k}}/{n}, \sum_{k=1}^{J} {N_{k}}\theta_k/n\big)\big\}$ (cf. Lemma~2.1 of Groeneboom and Jongbloed \cite{Groeneboom2014}). the same solution is obtained even if the $\LL_2(G_n)$-distance is replaced by a wider class; see Theorem~2.1 of Groeneboom and Jongbloed \cite{Groeneboom2014}.

We make one of the following design assumptions (DD) or (DR) on the predictor $X$ and the assumption (E) on the error variables. 

{\bf Condition (DD)} (Deterministic predictor). The predictor variables $X$ is deterministic assuming values $X_1,\ldots,X_n$, and the counts $N_1,\ldots,N_J$ of $J$ equispaced intervals $I_1,\ldots,I_J$ satisfy, for $J\to \infty$,  $\max \{ N_j: 1\le j\le J\}/n\to 0$. 

The bounds are clearly implied by the condition  
$\sup\{ |G_n(x)-G(x)|:\, x\in [0,1]\}=o(J^{-1})$, where $G$ has a positive and continuous density $g$ on $[0,1]$. 

{\bf Condition (DR)} (Random predictor). The predictor $X$ is sampled independently from a distribution $G$, having a density $g$, which is bounded and bounded away from zero on $[0,1]$.

The assumption of normality on the error is only a working hypothesis. We assume the following condition on the error. 

{\bf Condition (E)} (True error distribution). 
The error variables $\varepsilon_1,\ldots,\varepsilon_n$ are i.i.d. sub-Gaussian with mean $0$ and variance $\sigma_0^2$.

We denote the true value of the regression function by $f_0$ and write the vector of function values at the observed points by 
$\boldsymbol{F}_0=(f_0(X_1),\ldots,f_0(X_n))$ and the corresponding true distribution by $P_0$. Let $\mathrm{E}_0(\cdot)$ and $\mathrm{Var}_0(\cdot)$ be the expectation and variance operators taken under the true distribution $P_0$.

The error variance $\sigma^2$ may be estimated by maximizing the marginal likelihood of $\sigma$. From \eqref{eq:marginal}, it follows that the marginal maximum likelihood estimate of $\sigma^2$ is given by 
\begin{equation} 
\label{eq:MLE}
\hat{\sigma}_n^2=n^{-1}{(\boldsymbol{Y}-\boldsymbol{B\zeta})\trans(\boldsymbol{B\Lambda B}\trans+\boldsymbol{I}_n)^{-1}(\boldsymbol{Y}-\boldsymbol{B\zeta})}.
\end{equation}
The plug-in posterior distribution of $f$ is then obtained by substituting $\hat{\sigma}_n$ for $\sigma$ in \eqref{eq:posterior}. 
If instead, we equip $\sigma^2$ with inverse-gamma prior $\sigma^2\sim \mathrm{IG}(\beta_1,\beta_2)$, then a fully Bayes procedure can be based on the posterior distribution 
\begin{equation} 
\label{eq:hierarchical}
\hat{\sigma}_n^2 \sim \mathrm{IG}(\beta_1+n/2, \beta_2+(\boldsymbol{Y}-\boldsymbol{B\zeta})\trans (\boldsymbol{B\Lambda B}\trans+\boldsymbol{I}_n)^{-1}(\boldsymbol{Y}-\boldsymbol{B\zeta})/2).
\end{equation}

\section{Posterior contraction rates under monotonicity}
\label{sec:rates}

\subsection{Preliminaries} 

To establish posterior contraction rates for $f$ with unknown $\sigma$, we need to effectively control the range of values of $\sigma$. 

It will be shown in Lemma~\ref{th:tsigma2con} that the maximum marginal likelihood estimator for $\sigma^2$ in the plug-in Bayes approach or the marginal posterior distribution of $\sigma^2$ in the fully Bayes approach, are consistent for any $f_0\in \mathcal{F}_+$, and the convergence is also uniform over $\mathcal{F}_+(K)$, for any fixed $K>0$. This allows us to treat $\sigma$ as essentially known in studying the posterior contraction.

As mentioned in the last section, we impose monotonicity on $f$ by projecting $f$ on $\mathcal{F}_+$ and use the projection posterior distribution for inference. The following argument shows that the concentration property of the posterior at any monotone function is not weakened by this procedure.  

Let $\Pi_n^*$ stand for the projection posterior distribution given by 
\begin{equation}
\label{projection-posterior} 
\Pi_n^*( B) = \Pi(f: f^* \in B|D_n), \quad B\subset \mathcal{F},
\end{equation}
where $f^*$ is the projection of $f$ on $\mathcal{F}_+$ with respect to some metric $d$ on the space of regression functions. Then for the true regression function $f_0\in \mathcal{F}_+$ and $\epsilon>0$, we have that 
\begin{equation}
\label{projection connection} 
\Pi_n^*(d(f,f_0)>2\epsilon)\le \Pi(f: d(f^*,f_0)>\epsilon|D_n), 
\end{equation}
and hence the contraction rate of the unrestricted posterior is inherited by the projection posterior. To see this, note that 
$d(f^*, f)\leq d(f_0, f)$ by the property of the projection. Hence, using the triangle inequality 
\begin{align}
\label{eq: rate inherited by projection}
&d(f^*, f_0) \leq d(f^*, f) + d(f, f_0) \leq d(f_0, f) + d(f, f_0) = 2d(f, f_0).
\end{align}
For $p\geq 1$, the $\LL_p$-projection of a step function is easily computable, by algorithms similar to the PAVA (see Section 3.1 of De Leeuw et al. \cite{PAVAinR}).

\subsection{Contraction rate under the $\LL_1$-metric} 

In this subsection, we derive the posterior contraction rate with respect to the $\LL_1$-metric. An important factor determining this rate is the approximation rate  of monotone functions by step functions. For the $\LL_1$-metric, step functions with regularly placed knots are adequate for the optimal approximation rate (see Lemma~\ref{approximation}), and hence it is sufficient to consider a Type 1 prior. In the following theorem, we derive the contraction rate at a monotone function in the $\LL_1$-metric by directly bounding posterior moments.

\begin{theorem}
	\label{th: contraction rate monotone}
	Let $f_0\in\mathcal{F}_+$, and assume that Condition {\rm (E)} holds. Let the prior on $f$ be of Type $\mathrm{1}$, with $J\rightarrow\infty$ and $J\ll n$. Let $\sigma^2$ be estimated using the plug-in Bayes approach or endowed with the inverse-gamma prior using a fully Bayes approach. Assume that either $X$ is deterministic and Condition {\rm (DD)} holds, or $X$ is random and Condition {\rm (DR)} holds. Then for $\epsilon_n=\max\{J^{-1}, (J/n)^{1/2}\}$ and every $M_n \rightarrow \infty$, 
	\begin{enumerate}
		\item [{\rm (a)}] $\mathrm{E}_0\ \Pi_n^*\left(\|f-f_0\|_{1,G_n}  > M_n\epsilon_n \right) \rightarrow0$ for the fixed design;  
		\item [{\rm (b)}]  $\mathrm{E}_0\ \Pi_n^*\left(\|f-f_0\|_{1,G} > M_n\epsilon_n \right) \rightarrow0$ for the random design. 
	\end{enumerate}
	In particular, if we choose $J \asymp n^{1/3}$, the projection-posterior contracts at the minimax rate $\epsilon_n = n^{-1/3}$. Moreover, the convergence is uniform over $\mathcal{F}_+(K)$ for any $K>0$. 
\end{theorem}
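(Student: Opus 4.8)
The plan is to avoid the usual prior-mass/testing machinery (which is delicate here because $f_0$ may sit on the boundary of $\mathcal{F}_+$) and instead exploit the explicit normal form of the \emph{unrestricted} posterior \eqref{eq:posterior} to bound the first posterior moment of the $\LL_1$-error directly. Taking the projection in the $\LL_1(G_n)$-metric so that \eqref{projection connection} applies with $d=\LL_1(G_n)$, the inequality $d(f^*,f_0)\le 2d(f,f_0)$ reduces the claim to a bound on the unrestricted posterior, and it suffices to show $\mathrm{E}_0\,\mathrm{E}[\,\|f-f_0\|_{1,G_n}\mid D_n]\lesssim\epsilon_n$ uniformly over $f_0\in\mathcal{F}_+(K)$; Markov's inequality applied to the conditional expectation then delivers both the projection-posterior statement and the arbitrary factor $M_n\to\infty$. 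Throughout I would first restrict $\sigma$ to a fixed compact interval bounded away from $0$ and $\infty$, which is legitimate on an event of probability tending to one by Lemma~\ref{th:tsigma2con}, so that $\sigma$ may be treated as a known constant.

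For the deterministic design the core computation is a per-interval split. Writing $f=\sum_j\theta_j\Ind_{I_j}$ and $\bar f_{0,j}=N_j^{-1}\sum_{i:X_i\in I_j}f_0(X_i)$, I would use
\[
\|f-f_0\|_{1,G_n}=\frac1n\sum_{j=1}^J\sum_{i:X_i\in I_j}|\theta_j-f_0(X_i)|\le \frac1n\sum_{j=1}^J N_j|\theta_j-\bar f_{0,j}|+\frac1n\sum_{j=1}^J\sum_{i:X_i\in I_j}|f_0(X_i)-\bar f_{0,j}|.
\]
The second (approximation) term is governed by the oscillation of $f_0$ on the equispaced intervals, whose total is bounded by $f_0(1)-f_0(0)\le 2K$ by monotonicity, yielding an $O(J^{-1})$ bound through Lemma~\ref{approximation}. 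For the first (estimation) term I would read off from \eqref{eq:posterior} that the posterior spread contributes $\mathrm{E}[|\theta_j-\hat\theta_j|\mid D_n]=\sqrt{2/\pi}\,\sigma(N_j+\lambda_j^{-2})^{-1/2}$, while the centering satisfies $\hat\theta_j-\bar f_{0,j}=(N_j+\lambda_j^{-2})^{-1}\{N_j\bar\varepsilon_j+\lambda_j^{-2}(\zeta_j-\bar f_{0,j})\}$ with $\bar\varepsilon_j=N_j^{-1}\sum_{i:X_i\in I_j}\varepsilon_i$. Taking $\mathrm{E}_0$ and using $\mathrm{E}_0|\bar\varepsilon_j|\le\sigma_0 N_j^{-1/2}$ together with the boundedness of $\zeta_j,\bar f_{0,j},\lambda_j$, each contribution is $O(N_j^{-1/2})$, so $\mathrm{E}_0\,\mathrm{E}[N_j|\theta_j-\bar f_{0,j}|\mid D_n]\lesssim N_j^{1/2}$. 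A Cauchy--Schwarz step gives $n^{-1}\sum_j N_j^{1/2}\le n^{-1}(J\sum_j N_j)^{1/2}=(J/n)^{1/2}$, and combining the two terms yields $\mathrm{E}_0\,\mathrm{E}[\,\|f-f_0\|_{1,G_n}\mid D_n]\lesssim\max\{J^{-1},(J/n)^{1/2}\}=\epsilon_n$, proving part (a); with $J\asymp n^{1/3}$ this is $n^{-1/3}$. The same argument carried out conditionally on $\boldsymbol{X}$ (the Cauchy--Schwarz bound being deterministic in the $N_j$) also settles the empirical-norm bound under the random design.

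For part (b) the remaining task is to pass from $\LL_1(G_n)$ to $\LL_1(G)$. Here I would use that after projection $f^*$ is monotone and, on the high-probability event above, uniformly bounded, so $|f^*-f_0|$ has total variation controlled in terms of $K$; combining $\sup_x|G_n(x)-G(x)|=O_P(n^{-1/2})$ from the Dvoretzky--Kiefer--Wolfowitz inequality with a Glivenko--Cantelli bound over the class of bounded monotone functions shows that $\|f^*-f_0\|_{1,G}$ and $\|f^*-f_0\|_{1,G_n}$ differ by a term that is $o(\epsilon_n)$. Since every bound above depends on $f_0$ only through $K$ (its supremum and total variation), the convergence is uniform over $\mathcal{F}_+(K)$.

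I expect the main obstacle to be precisely this random-design transfer: the unrestricted draws $f$ are neither bounded nor of bounded variation, so the comparison of empirical and population $\LL_1$-norms must be performed for the \emph{projected} functions $f^*$ after confining the posterior to a set of uniformly bounded monotone functions with overwhelming probability, and the entropy/Glivenko--Cantelli control over monotone functions must be shown to cost only $o(\epsilon_n)$ rather than a term of the same order. Invoking the consistency of $\sigma^2$ from Lemma~\ref{th:tsigma2con} uniformly over $\mathcal{F}_+(K)$, so that the posterior-spread constant stays bounded, is the other ingredient requiring care.
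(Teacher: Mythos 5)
Your proposal is correct in substance and, for part (a), coincides with the paper's argument: the paper likewise reduces to the unrestricted posterior via \eqref{projection connection}, confines $\sigma$ using Lemma~\ref{th:tsigma2con}, decomposes the error through the cell-averaged step function $f_{0J}$ (handled by Lemma~\ref{approximation}(a)), and controls the estimation term by posterior moments of the conjugate normal posterior \eqref{eq:posterior} followed by Cauchy--Schwarz and Markov. The only cosmetic difference is that the paper bounds second posterior moments, $\mathrm{E}(|\theta_j-\theta_{0j}|^2\mid D_n)=\mathrm{Var}(\theta_j\mid D_n)+|\mathrm{E}(\theta_j\mid D_n)-\theta_{0j}|^2$, while you bound first moments; both give $n^{-1}\sum_j N_j^{1/2}\le (J/n)^{1/2}$ and the conclusions are identical, including uniformity over $\mathcal{F}_+(K)$.

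Where you genuinely diverge is part (b). The paper does \emph{not} transfer from $\LL_1(G_n)$ to $\LL_1(G)$: it reruns the same moment computation directly in the $\LL_1(G)$-metric, taking $\theta_{0j}=\int_{I_j}f_0\,dG/G(I_j)$, using $G(I_j)\asymp 1/J$ under Condition (DR), and integrating the bounds over the design, which makes the random-design case a one-line modification of part (a). Your route --- contract in the empirical norm, then compare $\|f^*-f_0\|_{1,G_n}$ with $\|f^*-f_0\|_{1,G}$ for the \emph{projected} draws via Dvoretzky--Kiefer--Wolfowitz and the bounded-variation structure of $|f^*-f_0|$ --- is workable: since $h=|f^*-f_0|$ has total variation controlled by $K$ plus the range of $f^*$, integration by parts gives $|\int h\,d(G_n-G)|\lesssim \|G_n-G\|_\infty$, which is $O_{P}(n^{-1/2})=o(\epsilon_n)$ because $\epsilon_n=\max\{J^{-1},(J/n)^{1/2}\}\ge n^{-1/3}$ for every $J$. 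But it costs you an extra lemma that the paper never needs: you must show the posterior (or projection-posterior) concentrates on uniformly bounded monotone functions, e.g.\ $\max_j|\theta_j|\lesssim K+1$ with posterior probability tending to one (Gaussian tails of \eqref{eq:posterior} plus a union bound over $j\le J$, using that the posterior means are cell averages of $f_0$ perturbed by $\bar\varepsilon_j$ and bounded prior terms). You correctly flag this as the main obstacle, and it is fillable, but as written it is a sketch rather than a proof. In compensation, your route is more careful than the paper on one point the paper glosses over: it yields the $\LL_1(G)$ conclusion even when the projection is computed in the empirical metric (the practically relevant case), since the transfer happens after projection, whereas the paper's direct-$G$ argument implicitly takes the projection in $\LL_1(G)$ itself.
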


Under Condition (DR), the $\LL_1(G)$-distance is equivalent to the usual $\LL_1$-metric on $[0,1]$, and hence the contraction rate may be stated in terms of the latter. Conditions (DD) or (DR) on $X$ in the theorem above is needed only to conclude, using Lemma~\ref{th:tsigma2con}, that the estimator (or the posterior) for $\sigma$ is consistent. The conclusion is only used to get an upper bound for $\sigma$. If instead, we assume an upper bound for $\sigma$ (and change the prior on $\sigma$ to comply with the bound, if the fully Bayes procedure is used), we can remove these conditions. 

\subsection{Contraction rates under the empirical $\LL_p$-metric}

When the metric under consideration is $\LL_p$ with $p > 1$, step functions based on equidistant knots do not have the optimal approximation property. To restore this ability, we need to allow arbitrary knots (see Lemma~\ref{approximation}), and put a prior on these. Then the theory of posterior contraction for general (independent, not identically distributed) observations of Ghosal and van der Vaart \cite{noniid} can be applied by computing the prior concentration rate near the truth and bounding the metric entropy of a suitable subset of the parameter space, called a sieve. However, due to their ordering requirement and possibly very uneven allocation of the knots $\bm \xi$ used for the construction of the optimal approximation, the concentration of the prior distribution of $\bm \xi$ near their values appearing in the optimal approximation may be low, and hence the posterior concentration rate may suffer. The problem can be avoided by choosing knots from the observed values of $X$ when the predictor variable is deterministic and the empirical $\LL_p$-norm $\|f\|_{p,G_n}$ is used. Then the optimal rate (up to a logarithmic factor) can be obtained.

\begin{theorem}
	\label{th:l2 contraction}
	Let $X$ be deterministic assuming values $X_1,\ldots,X_n$. 
	Let $f_0\in\mathcal{F}_+$ and the prior on $f$ be of Type $\mathrm{2}$, with $\log J \asymp \log n$. Let $\varepsilon_1,\ldots,\varepsilon_n$ be i.i.d. normal with mean zero and  variance $\sigma^2$, which is estimated using the plug-in Bayes approach or is endowed with the inverse-gamma prior using a fully Bayes approach. Then for any $0<p\le 2$, $\mathrm{E}_0\ \Pi_n^*\left(\|f-f_0\|_{p,G_n}  > M_n\epsilon_n \right) \rightarrow 0$, where 
	$\epsilon _n =  \max \{\sqrt{(J \log n)/n}, J^{-1}\}$. In particular, the best rate $\epsilon_n=(n/\log n)^{-1/3}$ is obtained by choosing $J\asymp (n/\log n)^{1/3}$. Moreover, the conergence is uniform over $\mathcal{F}_+(K)$ for any $K>0$. 
	
	If instead of choosing $J$, we put a prior also on $J$ following \eqref{prior J}, then the contraction rate is given by $n^{-1/3}(\log n)^{(5-3t_2)/6}$. 
\end{theorem}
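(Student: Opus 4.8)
The plan is to push everything through the \emph{unrestricted} conjugate posterior in the empirical $\LL_2(G_n)$-distance and then read off all the $\LL_p$-statements for $0<p\le2$ for free. Since the projection in \eqref{isotonization} is the $\LL_2(G_n)$-projection, the inheritance argument in \eqref{projection connection} gives $\|f^*-f_0\|_{2,G_n}\le 2\|f-f_0\|_{2,G_n}$; and because $G_n$ is a probability measure, Jensen's inequality yields $\|g\|_{p,G_n}\le\|g\|_{2,G_n}$ for $1\le p\le2$ and $\int|g|^p\,dG_n\le\|g\|_{2,G_n}^{p}$ for $0<p<1$. Hence (after taking $p$-th roots when $p<1$ and adjusting constants) the event $\{\|f-f_0\|_{p,G_n}>M_n\epsilon_n\}$ for the projected draw is contained in $\{\|f-f_0\|_{2,G_n}>c\,M_n\epsilon_n\}$ for the unrestricted draw, so it suffices to prove $\mathrm{E}_0\,\Pi(\|f-f_0\|_{2,G_n}>\epsilon_n\mid D_n)\to0$ with $\epsilon_n$ as stated.

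For the deterministic design the data are independent but not identically distributed, so I would apply the general contraction theorem of \cite{noniid}, noting that for fixed $\sigma$ the Kullback--Leibler divergence and variation between the laws indexed by $f$ and $f_0$ are both $\asymp n\|f-f_0\|_{2,G_n}^2$, so the relevant geometry is exactly $\LL_2(G_n)$. With $J$ fixed I would verify, at $\epsilon_n=\max\{J^{-1},\sqrt{(J\log n)/n}\}$: (i) \emph{prior concentration}—by Lemma~\ref{approximation} some step function on data-based knots is within $J^{-1}$ of $f_0$ in $\LL_2(G_n)$; the Type~2 prior charges one admissible knot set with probability $\binom{n}{J-1}^{-1}\ge e^{-cJ\log n}$, and the bounded normal prior on $\bm\theta$ then places mass $\ge(c'\epsilon_n)^{J}\ge e^{-c''J\log n}$ on the coordinatewise $\epsilon_n$-box about the target, so the KL-ball has prior mass $\ge e^{-Cn\epsilon_n^2}$ because $n\epsilon_n^2\gtrsim J\log n$; (ii) \emph{entropy}—on the sieve of such step functions with $\|\bm\theta\|\le R_n$ (polynomial $R_n$), $\log\mathcal N(\epsilon_n,\cdot,\LL_2(G_n))\lesssim J\log n\le n\epsilon_n^2$, the $J\log n$ coming from the $\binom{n}{J-1}$ admissible knot sets; (iii) \emph{sieve mass}—the complement is a Gaussian tail, controllable below $e^{-(C+4)n\epsilon_n^2}$. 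The unknown $\sigma$ enters only through an upper bound, supplied uniformly over $\mathcal F_+(K)$ by Lemma~\ref{th:tsigma2con}. Optimizing gives $J\asymp(n/\log n)^{1/3}$ and the rate $(n/\log n)^{-1/3}$, with all constants depending on $f_0$ only through $K$, yielding uniformity.

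For the adaptive statement I would run the same machinery over the union sieve $\bigcup_{J\le K_n}\mathcal F_{J}$ with the prior \eqref{prior J} on $J$. Prior concentration is essentially unchanged: at the oracle level $J^*\asymp(n/\log n)^{1/3}$ the extra factor $\Pi(J=J^*)\ge e^{-b_1J^*(\log J^*)^{t_1}}$ contributes an exponent $\asymp n^{1/3}(\log n)^{t_1-1/3}$, dominated by $n\tilde\epsilon_n^2\asymp n^{1/3}(\log n)^{2/3}$ since $t_1\le1$; thus the KL-ball still carries mass $\ge e^{-c_0n\tilde\epsilon_n^2}$ at the \emph{prior scale} $\tilde\epsilon_n\asymp(n/\log n)^{-1/3}$. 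The difficulty is that a single-scale sieve cannot close: forcing the prior tail $\Pi(J>K_n)\le e^{-b_2K_n(\log K_n)^{t_2}}$ below the required $e^{-(C+4)n\tilde\epsilon_n^2}$ demands $K_n\gtrsim n\tilde\epsilon_n^2/(\log n)^{t_2}$, whereas the entropy budget at the same scale, $K_n\log n\le n\tilde\epsilon_n^2$, permits only $K_n\lesssim n\tilde\epsilon_n^2/\log n$, and for $t_2<1$ these two requirements are irreconcilable.

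The resolution—and the crux—is to use the two-scale form of \cite{noniid}, controlling prior mass and the sieve complement at the prior scale $\tilde\epsilon_n$ but allowing the entropy, and hence the contraction rate, at a larger scale $\epsilon_n\ge\tilde\epsilon_n$. Taking $K_n$ exactly as large as the tail condition forces, $K_n\asymp n^{1/3}(\log n)^{(2-3t_2)/3}$, the entropy of the union sieve becomes $\log\mathcal N\asymp K_n\log n\asymp n^{1/3}(\log n)^{(5-3t_2)/3}$, and equating this with $n\epsilon_n^2$ gives precisely $\epsilon_n\asymp n^{-1/3}(\log n)^{(5-3t_2)/6}$, which indeed satisfies $\epsilon_n\ge\tilde\epsilon_n$ with equality at $t_2=1$ (recovering the non-adaptive rate). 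The main technical obstacle is therefore the simultaneous bookkeeping of the three competing scales—the $J^{-1}$ approximation error, the $\sqrt{(J\log n)/n}$ entropy term, and the $t_2$-dependent cutoff $K_n$—and in particular the recognition that only by \emph{decoupling} the prior-mass scale from the entropy scale does the slowly decaying prior ($t_2<1$) become admissible; verifying prior concentration under the \emph{random} Type~2 knots via Lemma~\ref{approximation}, and eliminating $\sigma$ via Lemma~\ref{th:tsigma2con}, are comparatively routine.
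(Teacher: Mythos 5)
Your proposal is correct and follows essentially the same route as the paper's proof: reduction to the unrestricted posterior and to $p=2$ via \eqref{projection connection} and the power-mean inequality, then Theorem~8.26-style verification (prior mass via Lemma~\ref{approximation} plus the $\binom{n}{J-1}^{-1}$ knot probability, entropy $\lesssim J\log n$ on the same sieve, Gaussian-tail sieve complement, and $\sigma$ handled by Lemma~\ref{th:tsigma2con}). Your ``two-scale'' resolution for random $J$ is exactly the paper's device of a pre-rate $\bar\epsilon_n=(n/\log n)^{-1/3}$ for prior mass and sieve-complement control, with the cutoff $J_1\asymp n^{1/3}(\log n)^{(2-3t_2)/3}$ forced by the tail condition and the final rate $n^{-1/3}(\log n)^{(5-3t_2)/6}$ read off from the entropy scale $\sqrt{(J_1\log n)/n}$.
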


Clearly,  with a prior on $J$ given by \eqref{prior J}, the best rate $(n/\log n)^{-1/3}$ is obtained when $t_1=t_2=1$. A Poisson (or a suitably truncated Poisson) prior meets the requirement. Again, Condition (DD) is used only to derive the consistency of the estimator (or the posterior) of $\sigma$, and the condition can be removed if $\sigma$ is assumed to be bounded.

It would be interesting to obtain nearly optimal contraction rates for the continuous $\LL_p$-metric, but we do not know an appropriate prior on the knot-locations that would allow sufficient prior concentration to yield the desired result. For a continuous metric $\LL_p$-metric, the weak approximation with equal intervals allows only a sub-optimal approximation rate $J^{-1/p}$ (see Lemma~\ref{approximation}), and consequently a suboptimal posterior contraction rate $(n/\log n)^{-1/(p+2)}$.  

\section{Bayesian testing for monotonicity of $f$}
\label{sec:testing}

A natural test for the hypothesis of monotonicity is given by the posterior probability of $\mathcal{F}_+$: reject the hypothesis if $\Pi (f\in \mathcal{F}_+|D_n)$ is smaller than $1/2$, say. The problem with this test is that if the true regression $f_0\in \mathcal{F}$ belongs to the boundary of $\mathcal{F}_+$, then even if the posterior is consistent at $f_0$, the posterior probability $\Pi (f\in \mathcal{F}_+|D_n)$ may be low because a large part of a neighborhood of $f_0$ may fall outside $\mathcal{F}_+$. In order to avoid such false rejections, one may quantify a test based on a discrepancy measure $d(f,\mathcal{F}_+)$ between $f$ sampled from the posterior, and the set of monotone functions $\mathcal{F}_+$ (that is, a nonnegative function of $f$ that vanishes exactly on $\mathcal{F}_+$), or equivalently, $d(f,f^*)$ where $f^*$ is the projection of $f$ on $\mathcal{F}_+$. A reasonable test can be based on the posterior probability $\Pi (f: d(f,\mathcal{F}_+)<\tau_n|D_n)$ for a sequence $\tau_n\to 0$ slowly. This test is equivalent to rejecting for low values of the posterior probability $\Pi (\mathcal{F}_+^{\tau_n}|D_n)$ of the $\tau_n$-neighborhood $\mathcal{F}_+^{\tau_n}=\{f: d(f,\mathcal{F}_+)<\tau_n\}$  of $\mathcal{F}_+$. This approach was also pursued by Salomond \cite{Salomond2014a,salomond2018}, with a discrepancy measure given by $d(f,\mathcal{F}_+)=\max \{ (\theta_j -\theta_i): 1\le j\le i\le J\}$ for $f=\sum_{j=1}^J \theta_j \Ind_{I_j}$ (with equidistant knots) and a cut-off $\tau_n =\sqrt{(J\log n)/n}$. This test has probability of Type I error going to zero and has high power against smooth alternatives, if appropriately separated from the null. However, the power of this test at a non-smooth alternative may not go to one. This prompts us to propose an alternative test, based on the $\LL_1$-distance as the discrepancy measure, which has the property of universal consistency, that is, the power at any fixed alternative goes to one.   

Let $\mathcal{H}(\alpha,L)$ be the H\"older space of $\alpha$-smooth function with H\"older norm bounded by $L$ (see Definition~C.4 of Ghosal and van der Vaart \cite{ghosal2017fundamentals}). 
\begin{theorem}
	\label{th: testing 1}
	Consider a Type {\rm 1} prior with $J\asymp n^{1/3}$. Let $\sigma^2$ be estimated using the plug-in Bayes approach or endowed with the inverse-gamma prior using a fully Bayes approach. Assume that $X$ is random and Condition {\rm (DR)} holds, and the errors satisfy Condition {\rm (E)}. For $d(f_1,f_2)=\int |f_1-f_2|dG$, consider 
	the test defined by $\phi_n = \Ind \{  \Pi(d(f,\mathcal{F}_+) \leq  M_n  n^{-1/3}|D_n) < \gamma\}$, where  $0<\gamma<1$ is a predetermined constant and $M_n\to \infty$ is  fixed slowly growing sequence. Then the following assertions hold. 
	\begin{enumerate}
		\item [{\rm (a)}] $($Consistency under $H_0):$ For any fixed $f_0 \in \mathcal{F}_+$, $\mathrm{E}_0\phi_n \rightarrow 0$, and further the convergence is uniform over $\mathcal{F}_+(K)$.
		\item [{\rm (b)}] $($Universal Consistency$):$ For any fixed $f_0$ integrable on $[0, 1]$ and $f_0 \notin \bar{\mathcal{F}}_+$, $\mathrm{E}_0(1 - \phi_n) \rightarrow 0$.
		\item [{\rm (c)}] $($High power at converging smooth alternatives$):$ For any $0<\alpha\le 1$ and $L>0$, 
		$
		\sup\{ \mathrm{E}_0 (1 - \phi_n) : f_0 \in \mathcal{H}(\alpha, L), d(f_0, \mathcal{F}_+) > \rho_n(\alpha)\} \rightarrow 0$, 
		where 
		$$\rho_n(\alpha)=\begin{cases} 
		C n^{-\alpha/3}, & \mbox{ for some }C>0 \mbox{ if }\alpha<1,\\ 
		
		C M_n n^{-1/3}, & \mbox{ for any }C>1 \mbox{ if }\alpha=1.  
		\end{cases}$$
	\end{enumerate}
\end{theorem}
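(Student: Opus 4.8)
The plan is to reduce all three parts to properties of the \emph{unrestricted} posterior law of $\|f-f_0\|_{1,G}$, since the discrepancy appearing in the test equals $d(f,\mathcal{F}_+)=\|f-f^*\|_{1,G}$ and obeys two elementary bounds. Because $f^*$ is the nearest monotone function to $f$, for $f_0\in\mathcal{F}_+$ we have $d(f,\mathcal{F}_+)\le\|f-f_0\|_{1,G}$, whereas for an arbitrary $f_0$ the triangle inequality gives $d(f,\mathcal{F}_+)\ge d(f_0,\mathcal{F}_+)-\|f-f_0\|_{1,G}$. The first bound governs the null behavior in (a); the second converts a lower bound on the fixed (or slowly shrinking) distance $d(f_0,\mathcal{F}_+)$, together with an upper bound on the posterior spread of $\|f-f_0\|_{1,G}$, into the power statements (b) and (c).

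For part (a) I would write $\Pi(d(f,\mathcal{F}_+)>M_nn^{-1/3}\mid D_n)\le\Pi(\|f-f_0\|_{1,G}>M_nn^{-1/3}\mid D_n)$ and invoke the unrestricted-posterior $\LL_1(G)$-contraction established en route to Theorem~\ref{th: contraction rate monotone} (with $J\asymp n^{1/3}$, so $\epsilon_n=n^{-1/3}$), which already supplies both the consistency of $\sigma$ through Lemma~\ref{th:tsigma2con} and the uniformity over $\mathcal{F}_+(K)$. The right side tends to zero in $P_0$-mean, so the posterior probability inside the indicator exceeds $\gamma$ with probability tending to one, giving $\mathrm{E}_0\phi_n\to0$.

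For (b) and (c) the core step is a uniform upper bound on the posterior spread of $\|f-f_0\|_{1,G}$. I would split it, through the $\LL_2(G)$-projection $\Pi_Jf_0$ of $f_0$ onto $\mathcal{F}_J$, into an approximation (bias) term $\|\Pi_Jf_0-f_0\|_{1,G}$ and a fluctuation term governed by the explicit Gaussian posterior~\eqref{eq:posterior}: writing $\hat f$ for the posterior mean, Markov's inequality and the half-normal mean give $\mathrm{E}[\|f-\hat f\|_{1,G}\mid D_n]=\sum_j\mathrm{E}[|\theta_j-\hat\theta_j|\mid D_n]\,G(I_j)\asymp\hat\sigma_n\sqrt{J/n}=O_{P_0}(n^{-1/3})$ once $\hat\sigma_n=O_{P_0}(1)$, while the posterior mean obeys $\|\hat f-\Pi_Jf_0\|_{1,G}=O_{P_0}(n^{-1/3})$. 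In (b), $f_0$ integrable makes the bias $\|\Pi_Jf_0-f_0\|_{1,G}\to0$ by the martingale (Lebesgue differentiation) argument, while $d(f_0,\mathcal{F}_+)=\delta_0>0$ is fixed; hence $\{d(f,\mathcal{F}_+)\le M_nn^{-1/3}\}\subseteq\{\|f-f_0\|_{1,G}\ge\delta_0/2\}$ eventually, and the latter has posterior probability tending to zero, so $\mathrm{E}_0(1-\phi_n)\to0$. In (c), the $\LL_1$-approximation bound $\|\Pi_Jf_0-f_0\|_{1,G}\lesssim J^{-\alpha}\asymp n^{-\alpha/3}$ for $\alpha$-Hölder functions (cf.\ Lemma~\ref{approximation}), uniform over $\mathcal{H}(\alpha,L)$, yields $\|f-f_0\|_{1,G}\le A(n^{-\alpha/3}+n^{-1/3})$ with posterior probability tending to one; combined with $d(f_0,\mathcal{F}_+)>\rho_n(\alpha)$ this forces $d(f,\mathcal{F}_+)>\rho_n(\alpha)-A(n^{-\alpha/3}+n^{-1/3})>M_nn^{-1/3}$, valid for $C$ large when $\alpha<1$ (since $n^{-\alpha/3}\gg n^{-1/3}$) and for any $C>1$ when $\alpha=1$ (since the bias and fluctuation are then $o(M_nn^{-1/3})$).

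I expect the main obstacle to be the uniform, sharp control of the unrestricted posterior of $\|f-f_0\|_{1,G}$ at a possibly non-monotone $f_0$; in particular, ensuring $\hat\sigma_n=O_{P_0}(1)$ (or a bounded $\sigma$-posterior) when $f_0$ is merely integrable lies outside the scope of Lemma~\ref{th:tsigma2con}, and a separate moment bound on the within-interval sum of squares in \eqref{eq:MLE} is needed; fortunately one only needs the weak conclusion $\hat\sigma_n^2=o(n^{2/3})$, which keeps the fluctuation $o(\delta_0)$. A secondary delicate point is the boundary case $\alpha=1$ of (c), where the $J^{-1}$ bias and the $n^{-1/3}$ fluctuation must both be shown to be $o(M_nn^{-1/3})$ rather than merely $O(M_nn^{-1/3})$, so that the admissible constant is dictated solely by $\rho_n(1)$ and any $C>1$ works; this hinges on $M_n\to\infty$.
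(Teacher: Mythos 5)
Your proposal is correct and follows essentially the same route as the paper's proof: the same projection inequalities $d(f,\mathcal{F}_+)\le \|f-f_0\|_{1,G}$ under the null and $d(f,\mathcal{F}_+)\ge d(f_0,\mathcal{F}_+)-\|f-f_0\|_{1,G}$ under alternatives, the same reduction to the unrestricted-posterior moment bounds underlying Theorem~\ref{th: contraction rate monotone}, the same martingale-convergence argument for $\|f_{0J}-f_0\|_{1,G}\to 0$ in (b), and the same $J^{-\alpha}$ step-function approximation with the $M_n\to\infty$ slack settling the $\alpha=1$ case in (c). Your closing caveat about controlling $\hat\sigma_n$ when $f_0$ is merely integrable and non-monotone is well taken---the paper's proof silently reuses Lemma~\ref{th:tsigma2con}, which is stated only for $f_0\in\mathcal{F}_+$---though your asserted bound $\hat\sigma_n^2=o(n^{2/3})$ would itself require justification for $f_0\in\LL_1\setminus\LL_2$, so this point is a gap shared by both arguments rather than resolved by yours.
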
	

In the above theorem, the $\LL_1(G)$-distance may be replaced by the $\LL_1$-distance under the Lebesgue measure, since under Condition (DR), these two metrics are equivalent. In this case, part (c) may be strengthened by replacing the H\"older space $\mathcal{H}(\alpha, L)$ by the Sobolev space with $(1,\alpha)$-Sobolev norm bounded by $L$ (see Definition~C.6 of Ghosal and van der Vaart \cite{ghosal2017fundamentals}). Also, if $G$ is replaced by the empirical distribution $G_n$ (and assuming that Condition (DD) holds instead of Condition (DR) if $X$ is deterministic), the conclusions in parts (a) and (c) will still hold. The proof is very similar. If $\sigma$ has a known bound, then Condition (DD) or Condition (DR) is not needed. 

The procedure involving the test $\phi_n$ is computationally simple as it does not involve a prior on $J$. The algorithm for median isotonic regression (see Robertson and Wright \cite{robertson1973} and De Leeuw et al. \cite{PAVAinR}) allows us to compute $d(f, \mathcal{F}_+)$ very efficiently. However, with a deterministic choice of $J$, the posterior contraction is not adaptive on classes of functions with different smoothness $\alpha$. Therefore an order of separation $n^{-\alpha/3}$ (up to a logarithmic factor) is needed, which is larger than the optimal order $n^{-\alpha/(1+2\alpha)}$ of separation for $\alpha<1$. Adaptation can however be restored by using a prior on $J$ and letting cut-off value for the discrepancy with $\mathcal{F}_+$ depend on $J$, as in Salomond \cite{salomond2018}, if the class of regression functions is uniformly bounded.

\begin{theorem}
	\label{th: testing 2}
	Let the prior on $f$ be of Type $\mathrm{3}$ with $J$ given a Poisson prior, and $\sigma$ be bounded and be given a positive prior density with bounded support containing the true value $\sigma_0$. Assume that $X\sim G$ and $G$ satisfies Condition {\rm (DR)}. Let $\phi_n= \Ind \{  \Pi(d(f,\mathcal{F}_+) \leq  M_0 \sqrt{(J\log n)/n} |D_n) < \gamma\}$, where $d$ is the Hellinger distance on $p_f(y,x)= (2\pi\sigma^2)^{-1/2}\exp[-(y-f(x))^2/(2\sigma^2)]g(x)$, the density  induced by $f$, $0<\gamma<1$ is a predetermined constant and $M_0>0$ is a sufficiently large constant. 
	\begin{enumerate}
		\item [{\rm (a)}] $($Consistency under $H_0):$ For any fixed $f_0 \in \mathcal{F}_+$, $\mathrm{E}_0\phi_n \rightarrow 0$, and the convergence is uniform over $\mathcal{F}_+(K)$.
		\item [{\rm (b)}] $($Universal Consistency$):$ For any fixed $f_0$ integrable on $[0, 1]$ and $f_0 \notin \bar{\mathcal{F}}_+$, $\mathrm{E}_0(1 - \phi_n) \rightarrow 0$.
		\item [{\rm (c)}] $($Adaptive power at converging smooth alternatives$):$
		For $f_0 \notin \mathcal{F}_+$, $f_0 \in \mathcal{H}(\alpha, L)$, there exists $C$ depending on $\alpha$ and $L$ only such that 
		$$
		\sup\{ \mathrm{E}_0 (1 - \phi_n) : f_0 \in \mathcal{H}(\alpha, L), d(f_0, \mathcal{F}_+) > C (n/\log n)^{-\alpha/(1+2\alpha)} \} \rightarrow 0.$$
	\end{enumerate}
\end{theorem}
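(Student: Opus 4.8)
The plan is to recast all three assertions as statements about where the \emph{unrestricted} posterior $\Pi(\cdot\mid D_n)$ places its mass relative to the shrinking neighborhood $\mathcal{F}_+^{\tau_n}=\{f:d(f,\mathcal{F}_+)\le \tau_n(J)\}$, where $\tau_n(J)=M_0\sqrt{(J\log n)/n}$ and $d(f_1,f_2)$ denotes the Hellinger distance between the induced densities $p_{f_1},p_{f_2}$; by definition $\phi_n=1$ exactly when $\Pi(\mathcal{F}_+^{\tau_n}\mid D_n)<\gamma$. Two elementary facts drive everything. First, $f\mapsto d(f,\mathcal{F}_+)$ is $1$-Lipschitz in the Hellinger distance, so $|d(f,\mathcal{F}_+)-d(f_0,\mathcal{F}_+)|\le d(f,f_0)$; this reduces each part to controlling the posterior law of $d(f,f_0)$. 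Second, because $\sigma$ is confined to a bounded set containing $\sigma_0$ and $g$ is bounded away from $0$ and $\infty$ under Condition~{\rm (DR)}, the Hellinger distance on the regression densities is, for uniformly bounded functions, equivalent to the $\LL_2(G)$-distance on the regression functions (since $1-e^{-u^2/(8\sigma^2)}\asymp u^2$ on bounded $u$), which lets the step-function approximation and entropy bounds be imported directly and makes the $\sigma$-contribution to the Kullback--Leibler divergence harmless.

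The engine of the proof is an \emph{adaptive} posterior contraction statement for the unrestricted Type~3 posterior, proved through the general theory of \cite{noniid,ghosal2017fundamentals}. On each model $\mathcal{F}_j$ I would verify the standard trio of conditions: (i) a prior-mass lower bound near $p_{f_0}$ from the normal priors on the heights together with \eqref{prior J}; (ii) an entropy bound for a sieve of bounded-height step functions with at most $\bar{J}_n$ pieces; and (iii) exponential control of the residual prior mass via \eqref{prior J}. Combining these with the approximation rate of $\alpha$-smooth (resp.\ monotone, resp.\ merely integrable) functions by $j$-step functions yields a $J$-indexed contraction bound
\[
\Pi\big(d(f,f_0)>M_0\sqrt{(J\log n)/n}\ \text{or}\ J\notin[\underline{J}_n,\bar{J}_n]\,\big|\,D_n\big)\to_{P_0}0,
\]
for $M_0$ large, with the posterior of $J$ concentrated in a band $[\underline{J}_n,\bar{J}_n]$ around the bias--variance optimal value. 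The decisive point of the $J$-dependent threshold is that $\tau_n(J)$ automatically tracks the conditional rate $\sqrt{(J\log n)/n}$ for whichever $J$ the posterior selects, so no knowledge of the true smoothness is needed. The entropy and prior-mass computations behind this adaptive bound are the main obstacle.

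Part~(a) then follows quickly. For $f_0\in\mathcal{F}_+$ we have $d(f_0,\mathcal{F}_+)=0$, hence $d(f,\mathcal{F}_+)\le d(f,f_0)$, and on the high-probability event from the display the bulk of the posterior satisfies $d(f,f_0)\le M_0\sqrt{(J\log n)/n}=\tau_n(J)$; therefore $\Pi(\mathcal{F}_+^{\tau_n}\mid D_n)\to_{P_0}1>\gamma$ and $\mathrm{E}_0\phi_n\to0$. Uniformity over $\mathcal{F}_+(K)$ is inherited from the uniformity of the contraction bound over that class.

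For parts~(b) and~(c) I would use the containment $\mathcal{F}_+^{\tau_n}\subseteq\{d(f,f_0)\ge d(f_0,\mathcal{F}_+)-\tau_n(J)\}$ coming from the Lipschitz bound. In part~(b), $f_0\notin\bar{\mathcal{F}}_+$ forces $d(f_0,\mathcal{F}_+)=:2\delta_0>0$ while $\tau_n(J)\to0$ on the band, so for large $n$ the neighborhood lies in $\{d(f,f_0)\ge\delta_0\}$, whose posterior mass tends to $0$ by Hellinger posterior consistency at $p_{f_0}$ (Schwartz-type, needing only positivity of the prior Kullback--Leibler support, as supplied by the engine step). In part~(c), for $f_0\in\mathcal{H}(\alpha,L)$ the band gives $\bar{J}_n\asymp(n/\log n)^{1/(1+2\alpha)}$, so on the band $\tau_n(J)\lesssim\bar\tau_n\asymp(n/\log n)^{-\alpha/(1+2\alpha)}=\rho_n(\alpha)$, and the conditional contraction radius $M\epsilon_n(\alpha)$ is likewise $\asymp\rho_n(\alpha)$; choosing the separation constant $C$ in $d(f_0,\mathcal{F}_+)>C\rho_n(\alpha)$ larger than the sum of these two implied constants forces $\mathcal{F}_+^{\tau_n}\cap\{J\le\bar{J}_n\}\subseteq\{d(f,f_0)\ge M\epsilon_n(\alpha)\}$, whose posterior mass vanishes by the adaptive contraction bound. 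In both cases $\Pi(\mathcal{F}_+^{\tau_n}\mid D_n)\to_{P_0}0<\gamma$, so $\mathrm{E}_0(1-\phi_n)\to0$. The secondary difficulty, beyond the adaptive contraction itself, is establishing the strict positivity of $d(f_0,\mathcal{F}_+)$ in the Hellinger metric and retaining a Kullback--Leibler neighborhood of $p_{f_0}$ when $f_0$ is merely integrable and unbounded in part~(b), which I would resolve by a truncation and approximation of $f_0$ by bounded step functions, using that $g$ is bounded below to transfer positivity from $\LL_2(G)$ to Hellinger.
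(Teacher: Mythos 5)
Your reduction of part (a) contains a genuine gap. The ``engine'' display you posit,
$\Pi\big(d(f,f_0)>M_0\sqrt{(J\log n)/n}\ \text{or}\ J\notin[\underline{J}_n,\bar{J}_n]\mid D_n\big)\to_{P_0}0$,
is not a consequence of the general contraction theory and is in general false, because the $J$-adapted threshold $\tau_n(J)=M_0\sqrt{(J\log n)/n}$ accounts only for the stochastic (variance) part of the error and ignores the approximation bias $d(f_{0J},f_0)$, which for $f\in\mathcal{F}_J$ is a lower bound on $d(f,f_0)$. The theory of Ghosal--van der Vaart delivers a \emph{global} rate $\epsilon_n$ together with an \emph{upper} bound on the posterior of $J$ (via the sieve); it gives no lower band $\underline{J}_n$, and the prior \eqref{prior J} actively pushes the posterior toward small $J$. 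Concretely, for $f_0\in\mathcal{H}(1,L)$ the bias is $\asymp J^{-1}$ and the posterior can charge $J\asymp M^{-1}(n/\log n)^{1/3}$, where the total error is at least the bias $M\epsilon_n$ while $\tau_n(J)\asymp M_0M^{-1/2}\epsilon_n$; your inequality $d(f,f_0)\le\tau_n(J)$ then requires $M^{3/2}\le M_0$, a constant-chasing condition that nothing in the standard theory secures, and the demanded uniformity over $\mathcal{F}_+(K)$ (which lets $f_0$ vary with $n$) makes it strictly worse. Since part (a) rests entirely on $d(f,\mathcal{F}_+)\le d(f,f_0)\le\tau_n(J)$, it is not established. (A lower band of the kind you assert \emph{can} be extracted, but only through the paper's device \eqref{label7}, which you do not supply: since $f_{0J}$ minimizes the distance to $f_0$ over $\mathcal{F}_J$, posterior mass on $\{J: d(f_{0J},f_0)>M\epsilon_n\}$ is dominated by posterior mass outside the global ball.)

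The paper avoids the bias problem altogether by changing the centering: it proves (display \eqref{label4}) by \emph{direct} normal--normal conjugate moment bounds (Chebyshev plus maximal inequalities for $\bar\varepsilon_j$, with $N_j\asymp n/J$ from Lemma~\ref{lem:order counts}) that, simultaneously for all $J\le J_n$ with $\log J_n\asymp\log n$, the posterior concentrates around the $J$-step projection $f_{0J}$ at the \emph{pure variance} rate $\sqrt{(J\log n)/n}$ --- no bias term appears because $f_{0J}$ is the target, not $f_0$. For part (a) the key observation is that $f_{0J}\in\mathcal{F}_+$ whenever $f_0\in\mathcal{F}_+$, so $d(f,\mathcal{F}_+)\le d(f,f_{0J})\le\tau_n(J)$ regardless of how large the approximation bias is; the general theory is invoked only to get the crude rate $(n/\log n)^{-1/4}$ under monotonicity, whose sole use is to confine the posterior to $\{J\le J_n\}$. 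For parts (b) and (c) the paper uses the three-term decomposition \eqref{label8}, $d(f,\mathcal{F}_+)\ge d(f_0,f_0^*)-d(f,f_{0J})-d(f_{0J},f_0)$, controlling the middle term by \eqref{label4} and the last by \eqref{label7}. Your triangle-inequality versions of (b) and (c) are essentially sound, since there only the \emph{upper} band on $J$ and the global rate are needed (note, though, that in (b) with $J_1$ a small multiple of $n/\log n$ one gets $\tau_n(J)\le M_0\sqrt{c}$, an arbitrarily small constant, not $\tau_n(J)\to0$ as you state); your truncation remark for merely integrable $f_0$ addresses a point the paper itself elides by assuming $f_0$ bounded in its proof of (b). But the proposal as written does not prove part (a), and the missing idea is precisely the monotone bias-free centering at $f_{0J}$.
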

In the theorem, $G$ can be replaced by the uniform distribution in the definition of the test. 
In this case, the H\"older space $\mathcal{H}(\alpha, L)$ in part (c) can be replaced by the Sobolev space with $(2,\alpha)$-Sobolev norm bounded by $L$.

Unlike Theorem~\ref{th: testing 1}, the proof requires the application of the general theory of posterior contraction. The weaker Hellinger distance for separation is used so that a test required for the application of the theory is available automatically without requiring the regression functions to be bounded by a constant, a condition that will rule out the conjugate normal prior needed in the proof. An alternative is to use the empirical $\LL_1$-distance and conclude parts (a) and (c) only, assuming that $N_j\asymp n/J$ uniformly in $j=1,\ldots,J$.

\section{Proofs of the main results}
\label{sec:proofs}

\begin{proof}[Proof of Theorem \ref{th: contraction rate monotone}]
	In view of \eqref{projection connection}, it is enough to obtain the contraction rate of the unrestricted posterior. 
	We prove the result for the plug-in Bayes approach; the fully Bayes case can be dealt with similarly. 
	From Lemma~\ref{th:tsigma2con}, get a shrinking neighborhood $\mathcal{U}_n$ of $\sigma_0$ with $P_0(\hat\sigma\in\mathcal{U}_n) \rightarrow 1$. Hence for the purpose of the proof, we may assume that $\hat\sigma\in\mathcal{U}_n$. 
	
	We first consider the case that $X$ is deterministic. 
	Let $f_{0J}=\sum_{j=1}^J\theta_{0j}\Ind_{I_j}$ with $\theta_{0j} =  N_j^{-1} \sum_{i:X_i\in I_j}f_0(X_i)$ for all $1\leq j\leq J$. By Lemma~\ref{approximation} (a), $\|f_{0J}-f_0\|_{1,G_n} \lesssim J^{-1}$ and the bound is also uniform for $f_0\in \mathcal{F}_+(K)$. To complete the proof, we now show that 
	\begin{equation} 
	\label{label0}
	\E_0 \Pi(\|f - f_{0J}\|_{1,G_n}>M_n\sqrt{J/n} \big| D_n)\to 0 \mbox { for any  } M_n\to \infty.
	\end{equation}  
	
	Since $f=\theta_j$ and $f_0=\theta_{0j}$ on $I_j$, $\|f - f_{0J}\|_{1,G_n}=n^{-1}\sum_{j=1}^J {N_j} |\theta_j-\theta_{0j}|$. 
	Hence by the Cauchy-Schwarz inequality followed by Markov's inequality,
	\begin{equation}
	\label{label1}
	\Pi(\|f - f_{0J}\|_{1,G_n}>M_n\sqrt{J/n} \big| D_n)\lesssim  \frac{1}{M_n^2 J}\sum_{j=1}^J N_j \mathrm{E}(|\theta_j-\theta_{0j}|^2\big| D_n).
	\end{equation}
	For $1\leq j \leq J$, we bound $\mathrm{E}(|\theta_j-\theta_{0j}|^2\big| D_n)= \mathrm{Var}(\theta_j\big| D_n) + |\mathrm{E}(\theta_j\big| D_n) - \theta_{0j}|^2$, bound the expectation of both terms, and put in \eqref{label1} to obtain the desired result. 
	For the first term,  
	\begin{align}
	\label{eq: contraction main calculation 2}
	N_j \mathrm{Var}(\theta_j\big| D_n) \le  \sup_{\sigma\in\mathcal{U}_n}\frac{N_j \sigma^2}{[N_j+\lambda_j^{-2}]^{1/2}} 
	\lesssim 1.
	\end{align}
	We bound $\mathrm{E}_0[ N_j |\mathrm{E}(\theta_j\big| D_n) - \theta_{0j}|^2]$ as 
	\begin{align*}
	%\label{eq: contraction main calculation 3}
	\mathrm{E}_0 \bigg [ N_j \bigg| \frac{N_{j}\bar{Y_{j}} + \frac{\zeta_{j}}{\lambda_{j}^{2}}}{N_{j} + \frac{1}{\lambda_{j}^{{2}}}} - \frac{\sum_{i:X_i\in I_j}f_0(X_i)}{N_j}\bigg |^2 \bigg ] & \lesssim 1+ \mathrm{E}_0\bigg| \frac{\sum_{i:X_i\in I_j}(Y_i-f_0(X_i))}{N_j}\bigg | . 
	\end{align*}
	Using the boundedness of $\zeta_j$ and $\lambda_j^{-2}$, and the second term in the last expression is bounded by $\sigma_0^2$ by the moment inequality. 
	
	For random predictors, we use the $\|\cdot\|_{1,G}$-distance, which involves another integration with respect to $X_1,\ldots,X_n$ on the left side of \eqref{label0}. 
	
\end{proof}
%%%%%%%%%%%%%%%%%%%%%%%%%%%%%%%%%%%%%

\begin{proof}[Proof of Theorem \ref{th:l2 contraction}]
	Because of \eqref{projection connection}, it suffices to obtain the contraction rate of the unrestricted posterior. 
	Since for $0<p<2$, the $\LL_p(G_n)$-distance is dominated by the $\LL_2(G_n)$-distance, it suffices to prove the result for $p=2$.  
	We shall apply the general theory of posterior contraction (Ghosal and van der Vaart \cite{ghosal2017fundamentals}, Chapter 8) using the sieve 
	\begin{equation} 
	\label{sieve1}
	\mathcal{P}_{n} = \big\{f  = \sum_{j=1}^{J} \theta_{j}\Ind_{[\xi_{j-1}, \xi_{j})}, \ \xi_{ 1}, \ldots, \xi_{J-1} \in \boldsymbol{X}, \max_j |\theta_j| \leq n \big\}.  
	\end{equation} 
	Let $p_{f,\sigma}^{(n)}$ denote the joint density of $Y_1,\ldots,Y_n$ for a regression function $f$. 
	We verify the conditions of Theorem~8.26 of Ghosal and van der Vaart \cite{ghosal2017fundamentals} for $\epsilon_n=\max\{ \sqrt{(J \log n)/n},J^{-1}\}$. Note that by Lemma~\ref{th:tsigma2con}, we can restrict $\sigma$ to  an arbitrarily small neighborhood of $\sigma_0$, so the test construction in Lemma~8.27 of Ghosal and van der Vaart \cite{ghosal2017fundamentals} is applicable.
	
	By direct calculations, the Kullback-Leibler divergence and the square Kullback-Leibler variation  are respectively equal to
	\begin{align*}
	K(p_{f_0,\sigma_0}^{(n)}; p_{f,\sigma}^{(n)})= \E_0 \log \frac{p_{f_0,\sigma_0}^{(n)}}{p_{f,\sigma}^{(n)}}=\frac{ n } {2\sigma^2} \|f-f_0\|^2_{2, G_n}+ \frac{n}{2}\big [\frac{\sigma_0^2}{\sigma^2}-1-  \log \frac{\sigma_0^2}{\sigma^2}\big], \\
	V_{2,0}(p_{f_0,\sigma_0}^{(n)}; p_{f,\sigma}^{(n)})= \mathrm{Var}_0	\log \frac{p_{f_0,\sigma_0}^{(n)}}{p_{f,\sigma}^{(n)}}= \frac{n}{4} \big ( \frac{\sigma_0^2}{\sigma^2}-1\big )^2+ \frac{n\sigma_0^2}{\sigma^4} \|f-f_0\|^2_{2, G_n}.
	\end{align*}
	Therefore for a sufficiently small $\epsilon$, there exists $C_1>0$ such that
	\begin{align*}
	B_{n, 0}((f_0, \sigma_0), \epsilon) &:= \{(f, \sigma): K(p_{f_0,\sigma_0}^{(n)}, p_{f,\sigma}^{(n)})\leq n\epsilon^2, V_{2,0}(p_{f_0,\sigma_0}^{(n)}; p_{f,\sigma}^{(n)}) \leq n\epsilon^2\} \\
	& \supset \{(f, \sigma): \|f- f_0\|^2_{2, G_n}\leq C_1\epsilon^2, |\sigma^2-\sigma_0^2|^2\leq C_1\epsilon^2\}.
	\end{align*} 
	By Lemma~\ref{approximation}, there exists $f_{0J}$ such that  $f_{0J}(\cdot)=\sum_{j=1}^J\theta_{0j}\Ind_{I_j}$, where $I_1,\ldots,I_J$ are an interval partition with knots $\{\xi_{0,1},\ldots, \xi_{0, J-1}\}\subset \{X_1,\ldots,X_n\}$ and $\|f_{0J}- f_0\|_{2, G_n}^2 \lesssim \epsilon_n^2$. By the prior independence of $f$ and $\sigma$, and because $-\log \Pi (|\sigma-\sigma_0|^2\le C\epsilon_n^2)\lesssim \log (1/\epsilon_n)\lesssim \log n$, it suffices that 
	\begin{align*}
	\Pi (\|f-f_{0J}\|_{2, G_n}^2 \le C_2 \epsilon_n^2)&= \Pi\big(\sum_{i=1}^{n} N_j (\theta_j - \theta_{0j})^2 \leq C_2 n\epsilon_n^2 \big|\boldsymbol{\xi}=\boldsymbol{\xi}_0\big) 
	\Pi(\boldsymbol{\xi}=\boldsymbol{\xi}_0) \\
	& \geq  \Pi\big(\bigcap\limits_{j=1}^{J}\big\{|\theta_j - \theta_{0, j}| \leq \sqrt{C_2} \epsilon_n\big\}\big) \frac{1}{\binom{n}{J-1}},
	\end{align*}
	since $\sum_{i=1}^{n}(f(X_i) - f_{0J}(X_i))^2=\sum_{j=1}^J N_j |\theta_j -\theta_{0j}|^2$ and $\sum_{j=1}^J N_j=n$. The last expression is at least of the order $ (C_3\epsilon_n)^{J}\ n^{-(J-1)}$ for some $C_3>0$. Putting these together, we have $-\log \Pi (B_{n, 0}((f_0, \sigma_0), \epsilon_n))\lesssim J [\log (1/\epsilon_n)+\log J]\lesssim J \log n\lesssim n\epsilon_n^2$ by the definition of $\epsilon_n$, fullfilling the condition of prior probability concentration needed for posterior contraction rate $\epsilon_n$. 
	
	Observe that the metric entropy $\log \mathcal{N}(\epsilon, \mathcal{P}_{n}, \|\cdot\|_{p,G_n})$   
	of the sieve  $\mathcal{P}_{n} $ in \eqref{sieve1} is bounded above by 
	$ J \log(n/\epsilon_n)\lesssim J \log n\lesssim n\epsilon_n^2$. 
	Finally, the prior probability $\Pi (\mathcal{P}_n^c)$ of the complement of the sieve $\mathcal{P}_{n}$ is bounded by 
	$J e^{-n^2/2}\ll e^{-c n\epsilon_n^2}$ for any $c>0$, 
	establishing condition (8.33)  of Ghosal and van der Vaart \cite{ghosal2017fundamentals}. This establishes the rate $\epsilon_n=\max(\sqrt{(J\log n)/n}, J^{-1})$ when $J$ is chosen deterministically. Clearly, the best choice is $J\asymp (n/\log n)^{1/3}$, giving the nearly optimal rate $(n/\log n)^{-1/3}$. 
	
	When $J$ is given a prior, to lower bound $\Pi (B_{n, 0}((f_0, \sigma_0), \epsilon))$, we intersect the set with $\{ J=J_0\}$, where $J_0\asymp (n/\log n)^{1/3}$. This gives an additional factor $e^{-b_1 J_0 (\log J_0)^{t_1}}$, which is absorbed in $e^{-c n \bar \epsilon_n^2}$ by adjusting the constant for a pre-rate $\bar \epsilon_n = (n/\log n)^{-1/3}$, because $t_1\le 1$. Modify the sieve in \eqref{sieve1} by intersecting with $\{J\le J_1\}$, where $J_1$ to be determined. The prior probability of the complement $\mathcal{P}_n^c$ then contributes an extra factor a constant multiple of $e^{-b_2 J_1 (\log J_1)^{t_2} }$ to $J_1 e^{-n^2/2}$. To obtain the final rate, we need to choose $J_1$ such that $J_1 (\log n)^{t_2}$ exceeds a sufficiently large multiple of $n \bar \epsilon_n^2$, and then the rate is given by $\sqrt{(J_1 \log n)/n}=n^{-1/3}(\log n)^{(5-3t_2)/6}$. 
\end{proof}
%%%%%%%%%%%%%%%%%%%%%%%%%%%%%%%%

\begin{proof}[Proof of Theorem \ref{th: testing 1}]
	
	(a) Let $f_0\in\mathcal{F}_+$. Using the definition of projection, 
	\begin{align*}
	&\mathrm{E}_0\Pi(\|f- f^*\|_{1,G}>M_n n^{-1/3}|D_n) \leq \mathrm{E}_0\Pi(\|f- f_0\|_{1,G}>M_n n^{-1/3}|D_n)\to 0 
	\end{align*}
	for $J\asymp n^{1/3}$
	by Theorem \ref{th: contraction rate monotone}. Then it follows that $\E_0 \phi_n=\mathrm{P}_0(\Pi(d(f,\mathcal{F}_+) \le M_n n^{-1/3}|D_n)<\gamma) \to 0$.  Further, the convergence is uniform over $f_0\in\mathcal{F}_+(K)$ for any $K>0$. 
	
	(b) Let $f_0\notin\bar{\mathcal{F}}_+$ be fixed and integrable. Using the properties of the projection, $d(f_0, \mathcal{F}_+) = \|f_0 - f_0^*\|_{1,G}$ is bounded by $\|f_0 - f^*\|_{1,G}$,  which, by the triangle inequality, is further bounded above by
	$$ \|f_0 - f\|_{1,G}+\|f- f^*\|_{1,G} = \|f- f_0\|_{1,G} +d(f, \mathcal{F}_+).$$ This leads to $d(f, \mathcal{F}_+) \geq d(f_0, \mathcal{F}_+)-\|f- f_0\|_{1,G}$, and hence $\Pi(d(f, \mathcal{F}_+) \leq M_n n^{-1/3}\big | D_n) \leq \Pi(\|f_0- f\|_{1,G} + M_n n^{-1/3} \geq d(f_0, \mathcal{F}_+)\big | D_n).$
	
	Let $\theta_{0j}=\int_{I_j}f_0 dG/G(I_j)$, $1\leq j\leq J$. Then as shown in the proof of Theorem \ref{th: contraction rate monotone}, $\Pi(\|f- f_{0J}\|_{1,G} > M_n\sqrt{J/n}\big | D_n)\rightarrow_{P_0}0$, and hence for $J\asymp n^{1/3}$, we have $
	\Pi(\|f- f_{0J}\|_{1,G} > M_n n^{-1/3}\big | D_n)\rightarrow_{P_0}0$. Next, since $f_0$ is integrable, by the martingale convergence theorem, $\|f_0-f_{0J}\|_{1,G}\rightarrow 0$. hence 
	\begin{align*}
	&\lefteqn{\mathrm{E}_0\Pi(\|f- f_0\|_{1,G}+M_n n^{-1/3}\geq d(f_0, \mathcal{F}_+)|D_n)} \\
	& \leq \mathrm{E}_0\Pi\left(\|f- f_{0J}\|_{1,G} \geq d(f_0, \mathcal{F}_+)-\|f_{0J}- f_0\|_{1,G}- M_n n^{-1/3} \big |D_n\right) \to 0
	\end{align*}
	because $d(f_0, \mathcal{F}_+)$ is fixed and positive. This implies that the probability of Type 2 error $\mathrm{P}_0(\Pi (d(f, \mathcal{F}_+) \leq M_n n^{-1/3}|D_n)\ge \gamma)\to 0 $. 
	
	(c) Let $f_0\notin\mathcal{F}_+$ and $f_0\in\mathcal{H}(\alpha,L)$ such that $d(f_0,\mathcal{F})\ge \rho_n(\alpha)$. Consider the step function $f_{0J}$ of $f_0$ as in part (b). By a well-known fact from approximation theory, we have that $\|f_0-f_{0J}\|_{1,G}\le C(L) J^{-\alpha}$ for some constant $C(L)$ depending only on $L$. For instance, the bound follows from de Boor \cite{deBoor} as step functions with equidistant points are B-splines of order $1$. Hence for $J\asymp n^{1/3}$, by we have $\Pi(\|f- f_0\|_{1,G}>M_n n^{-1/3}+ C(L) n^{-\alpha/3}|D_n)\rightarrow_{P_0}0$, uniformly for all $f_0\in\mathcal{H}(\alpha, L)$. Thus $d(f,\mathcal{F}_+)$ is 
	$$d(f,f^*)\ge d(f_0,f^*)-d(f,f_0)\ge d(f_0,\mathcal{F}_+)-d(f,f_0)\ge \rho_n(\alpha)-d(f,f_0),$$ 
	so that $$\Pi (d(f,\mathcal{F}_+)\le M_n n^{-1/3}|D_n)\le \Pi (\|f-f_0\|_{1,G}\ge \rho_n(\alpha)-M_n n^{-1/3}|D_n)	\rightarrow_{P_0}0$$  because for $\alpha<1$, 
	$$\rho_n(\alpha)-M_n n^{-1/3}\ge M_n n^{-1/3}+ C(L) n^{-\alpha/3}$$ for $C>C(L)$, while for $\alpha=1$, 
	$$\rho_n(\alpha)-M_n n^{-1/3}\ge M_n n^{-1/3}+ C(L) n^{-\alpha/3}$$ for $C>1$; the last follows because $M_n\to\infty$. 
\end{proof}

\begin{proof}[Proof of Theorem \ref{th: testing 2}]
	Let $f_0$ be a bounded, measurable true regression function (irrespective of monotonicity or smoothness). 
	For a given $J$, consider $f_{0,J}=\sum_{j=1}^J \theta_{0j}\Ind_{I_j}$ with $\theta_{0j}=\int_{I_j} f_0 dG$, $j=1,\ldots,J$. First, we show that for a given $\gamma'>0$ and sufficiently large $M_0$, 
	\begin{equation}
	\label{label4}
	\E_0 \Pi (\|f-f_{0J}\|_{2,G}\ge M_0 \sqrt{(J \log n)/n}, J\le J_n|D_n)<\gamma',
	\end{equation}
	provided that $\log J_n\asymp \log n$. We write the expression inside the expectation as 
	\begin{equation}
	\label{label6}
	\sum_{J=1}^{J_n} \Pi(J|D_n)\Pi \big( \sum_{j=1}^J  (\theta_j-\theta_{0j})^2 G(I_j)  \ge M_0^2 J (\log n)/n \big|D_n\big),
	\end{equation}
	and bound 
	\begin{align}
	\lefteqn{\Pi \big(\sum_{j=1}^J (\theta_j-\theta_{0j})^2 G(I_j) \ge M_0^2 J (\log n)/n\big|D_n\big)}\nonumber\\
	&\le \frac{n \sum_{j=1}^J G(I_j) [\mathrm{Var}(\theta_j|D_n)+ (\E(\theta|D_n)-\theta_{0j})^2 ]}{M_0^2 J \log n}.
	\label{label5}
	\end{align}
	In view of Condition (DR), $G(I_j)$ are  of the order $1/J$, and by Lemma~\ref{lem:order counts}, $N_j$ are of the order $n/J$ in probability uniformly in $j=1,\ldots,J$. 
	Under the boundedness assumption on the prior parameters and the sampling variance, $ \mathrm{Var}(\theta_j|D_n)\lesssim 1/N_j\lesssim J/n$ with high probability, from the standard expressions for normal-normal conjugate setting (see the proof of Theorem~\ref{th: contraction rate monotone}). 
	
	To estimate $(\E(\theta|D_n)-\theta_{0j})^2$, with $\bar Y_j$ standing for $N_j^{-1} \sum_{i: X_i\in I_j} Y_i$ and  $\bar \varepsilon_j$ standing for $N_j^{-1} \sum_{i: \varepsilon_i\in I_j} Y_i$, we first observe that $|\bar \varepsilon_j|^2\le N_j^{-1} \log n\lesssim (J \log n)/n$ with high probability. Here we have used the maximal norm estimate using the squared-exponential Orlicz norm (see Lemma~2.2.2 of van der Vaart and Wellner \cite{empirical}) and $\#\{\bar \varepsilon_j: j\le J\le J_n\}\lesssim J_n^2$. By the same argument and the boundedness of $f_0$, we also have $$|N_j^{-1}\sum_{i: X_i \in I_j} f(X_i)-\theta_{0j}|^2\lesssim N_j^{-1} \log n\lesssim (J\log n)/n$$ with high probability. Also, $|\bar Y_j|$ is uniformly bounded with high probability, because $Y_i=f_0(X_i)+\varepsilon_i$. Putting in the expression for $\E(\theta|D_n)$, we conclude that $(\E(\theta|D_n)-\theta_{0j})^2\le (J\log n)/n$. 
	
	Putting these estimates in \eqref{label5}, we find that the expression is bounded by $M_0^{-2}$ with high probability simultaneously for all $J\le J_n$. Hence by \eqref{label6}, it follows that \eqref{label4} holds.  
	
	We also observe that, if the posterior contracts at the rate $\epsilon_n$ at $f_0$ in the sense that $\E_0 \Pi (J: d (f,f_0) >M_0 \epsilon_n|D_n)\to 0$ for some $M_0>0$, then 
	\begin{equation}
	\label{label7}
	\E_0 \Pi (J: d (f_{0J},f_0) >M_0 \epsilon_n|D_n)\to 0.		
	\end{equation}
	This follows because $f_{0J}$ is the closest to $f_0$ in $\mathcal{F}_J$, so if for a $J_0$, $d (f_{0J_0},f_0) >M_0 \epsilon_n$, then $\Pi (J=J_0|D_n)\le \Pi (J: d (f_{0J},f_0) >M_0 \epsilon_n|D_n)$. 
	
	(a) 
	If $f_0\in \mathcal{F}_+$, then $f_{0J}\in \mathcal{F}_+$. 
	By Lemma~\ref{approximation}, the $\LL_2$-approximation rate of $\mathcal{F}_J$ with equidistant intervals at a monotone function is $J^{-1/2}$. Then standard arguments as in the proof of Theorem~\ref{th:l2 contraction} show that the prior probability of a Kullback-Leibler neighborhood of size $\epsilon^2$ is bounded below by $\exp\{-C_1 \epsilon^{-2}\log (1/\epsilon)\}$. The required test with respect to $d$ is automatically available, while the sieve can be chosen as in Theorem~\ref{th:l2 contraction} and its entropy can be bounded in the same way by noting that $d$ is bounded by the $\LL_2(G)$-metric, leading to a (suboptimal) contraction rate $\epsilon_n=(n/\log n)^{-1/4}$. It also follows that for $J_n$ a large constant multiple of $\epsilon_n^{-1}$, the prior probability of $J>J_n$ is exponentially small compared with the prior concentration, and hence $\{J>J_n\}$ has a small posterior probability. Since $\log J_n \lesssim \log n$, it follows that \eqref{label4} holds.

	(b) Let $f_0\notin\bar {\mathcal{F}}_+$ be fixed and bounded. 
	By the martingale convergence theorem, $\|f_{0J}-f_0\|_{2,G}\to 0$ as $J\to \infty$, so for a given $\epsilon>0$, we can get $J_0$ (depending on $\epsilon$ but not depending on $n$) such that $\|f_{0J_0}-f_0\|_{2,G}<\epsilon/2$. Then for some $\delta>0$, we have $$\Pi (\|f-f_0\|_{2,G} <\epsilon)\ge \Pi(J=J_0) \Pi (\max \{|\theta_j-\theta_{0j}|: 1\le j\le J_0\} <\delta)>0.$$ Further, for $J_1$ an arbitrarily small multiple of $n/\log n$,  the excess prior probability $\Pi (J>J_1)$ can be bounded by $e^{-bn}$ for some $b>0$ depending on $c$. Considering a sieve 
	$	\mathcal{P}_{n} = \big\{f  = \sum_{j=1}^{J} \theta_{j}\Ind_{I_j}, \max_j |\theta_j| \leq n, J\le J_1 \big\}$, 
	standard estimates gives a bound for its metric entropy an arbitrarily small multiple of $n$. 	 
	Therefore it follows that (see Theorem~6.17 of Ghosal and van der Vaart \cite{ghosal2017fundamentals}) that $\E_0 \Pi(J>J_1|D_n)\to 0$ and the posterior is consistent at $f_0$ with respect to $d$, because $d(f_1,f_2)\le \|f_1-f_2\|_{2,G}$.  
	
	Observe that for any $f\in \mathcal{F}_J$, 
	\begin{equation} 
	\label{label8}
	d(f,\mathcal{F}_+)=d(f,f^*)\ge d(f_0,f_0^*)-d(f,f_{0J})-d(f_{0J},f_0).
	\end{equation}
	Since $f_0\notin\bar {\mathcal{F}}_+$, the first term is a fixed positive number. The second term is bounded by $\sqrt{(J \log n)/n}$ with high posterior probability, and $J$ can be restricted to be at most $J_1$, which can be taken to be an arbitrarily small multiple of $n/\log n$. Hence we can make the second terms as small as we like, with high posterior probability. By \eqref{label7} and posterior consistency, the third term can also be made arbitrarily small with high posterior probability. This shows that $d(f,\mathcal{F}_+)$ larger than some fixed positive number with high posterior probability, and hence it will exceed $\sqrt{(J \log n)/n}$ with high posterior probability for all $J\le J_1$, prompting the test to reject the null hypothesis of monotonicity with true probability tending to one.

	(c) Let $f_0\notin\mathcal{F}_+$ and $f_0\in\mathcal{H}(\alpha,L)$ such that $d(f_0,\mathcal{F}_+)\ge \rho_n(\alpha)$. The proof is very similar to part (b) with the following changes. First, by the well-known approximation rate $J^{-\alpha}$ at  functions  in  $\mathcal{H}(\alpha, L)$ by step functions, and standard arguments as used in part (a) and (b), giving prior concentration and metric entropy bounds, the posterior contraction rate at $f_0$ with respect to $d$ is $\epsilon_n=(n/\log n)^{-\alpha/(2\alpha+1)}$. Also, with high posterior probability, $J$ can be restricted to less than $J_1\asymp n \epsilon_n^2/\log n = (n/\log n)^{1/(2\alpha+1)}$. This bounds the second term by a multiple of $(n/\log n)^{-\alpha/(2\alpha+1)}$ with high posterior probability. Finally, by \eqref{label7}, the third term is also bounded by a multiple of $(n/\log n)^{-\alpha/(2\alpha+1)}$ with high posterior probability.
	Therefore, the expression on the right side of \eqref{label8} is larger than $M_0 \sqrt{(J \log n)/n}$ with high posterior probability. Thus the test rejects the null hypothesis of monotonicity with true probability tending to one. 
\end{proof}

\section{Auxiliary results}
\label{sec:appendix}

\begin{lemma}
	\label{lem:order counts}
	If the predictors are random, Condition {\rm (DR)} holds and $n/J\gg \log J$, then for 	
	$ A_n = \big\{ a_1 n/(2J) \leq \min(N_1,\ldots,N_J) \leq \max (N_1,\ldots,N_J) \leq 2a_2 n/J\big\}$,  
	we have $P_0(A_n)\rightarrow 1$.  
	In other words, $N_1,\ldots,N_J$ are simultaneously of the order $n/J$ in probability.
\end{lemma}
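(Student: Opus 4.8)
The plan is to treat each count $N_j$ as a binomial random variable, apply a standard concentration inequality to each, and then control all $J$ of them simultaneously through a union bound; the hypothesis $n/J \gg \log J$ is precisely what is needed to absorb the union-bound factor against the exponential tail decay.

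First I would note that, for the equidistant intervals relevant here, each $I_j$ has length $1/J$, and under Condition (DR) the density satisfies $a_1 \le g \le a_2$ on $[0,1]$ for constants $0 < a_1 \le a_2 < \infty$. Hence $p_j := G(I_j) = \int_{I_j} g\,dx$ lies in $[a_1/J, a_2/J]$, and since the $X_i$ are i.i.d.\ from $G$, each $N_j = \sum_{i=1}^n \Ind\{X_i \in I_j\}$ is distributed as $\mathrm{Binomial}(n, p_j)$ with mean $\mu_j := n p_j \in [a_1 n/J,\, a_2 n/J]$.

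Next I would apply the multiplicative Chernoff bounds for binomials to the two tails separately. For the upper tail, since $2 a_2 n/J \ge 2\mu_j$, we get $\P(N_j > 2 a_2 n/J) \le \P(N_j \ge 2\mu_j) \le e^{-\mu_j/3} \le e^{-a_1 n/(3J)}$. For the lower tail, since $a_1 n/(2J) \le \mu_j/2$, we get $\P(N_j < a_1 n/(2J)) \le \P(N_j \le \mu_j/2) \le e^{-\mu_j/8} \le e^{-a_1 n/(8J)}$. Thus there is a constant $c>0$ (depending only on $a_1$) such that each individual $N_j$ fails to lie in $[a_1 n/(2J),\, 2 a_2 n/J]$ with probability at most $2 e^{-c n/J}$.

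Finally, a union bound over $j = 1,\ldots,J$ yields
$$\P(A_n^c) \le 2J\, e^{-c n/J} = 2\exp(\log J - c n/J).$$
Under the assumption $n/J \gg \log J$, the exponent $\log J - c n/J \to -\infty$, so $\P(A_n^c) \to 0$ and hence $\P(A_n) \to 1$. The only point requiring any care is matching the Chernoff constants to the asymmetric target interval $[a_1 n/(2J),\, 2a_2 n/J]$ appearing in $A_n$, which is routine; the essential mechanism is simply that the exponential decay rate $n/J$ of the individual binomial tails dominates the logarithmic cost $\log J$ incurred by the union bound.
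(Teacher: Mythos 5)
Your proposal is correct and follows essentially the same route as the paper's proof: identify $N_j\sim\mathrm{Bin}(n,G(I_j))$ with $G(I_j)\in[a_1/J,a_2/J]$, apply a standard large-deviation (Chernoff) estimate to each tail to get an individual failure probability of order $e^{-cn/J}$, and take a union bound over the $J$ intervals, with $n/J\gg \log J$ absorbing the union-bound factor. Your version merely spells out the multiplicative Chernoff constants that the paper leaves implicit, so no further comment is needed.
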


\begin{proof}
	From  $N_j\sim\mathrm{Bin}(n;G(I_j))$ and $a_1/J\le G(I_j)\le a_2/J$ for every $1\leq j\leq J$, a standard large deviation estimate for $\mathrm{P}(N_j \geq 2 a_2n/J)$ is $2 e^{-Cn/J}$ for some constant $C>0$, and similarly for $\mathrm{P}(N_j \leq  a_1n/(2J))$. Adding these probabilities $J$ times,  we get the desired result because the factor $\log J$ can be absorbed in $n/J$. 
\end{proof}

\begin{lemma}
	\label{th:tsigma2con}
	Let the predictors be deterministic satisfying Condition {\rm (DD)} or be random satisfying Condition {\rm (DR)}. Let $f_0 \in \mathcal{F}_+$, the prior on $f$ of Type $\mathrm{1}$, and  Condition {\rm (E)} holds. Then for $J\rightarrow\infty$ such that $J\ll n$, we have
	\begin{enumerate}
		\item [(a)] the maximum marginal likelihood estimator $\hat{\sigma}_n^2$ converges in probability to $\sigma_0^2$ at the rate $\max\{n^{-1/2}, n^{-1}J\}$.
		\item [(b)] If $\sigma^2 \sim \mathrm{IG}(\beta_1,\beta_2)$ with $\beta_1>2$, $\beta_2>0$, then the marginal posterior distribution of $\sigma^2$ contracts at the rate $\max\{n^{-1/2}, n^{-1}J\}$.
	\end{enumerate}
\end{lemma}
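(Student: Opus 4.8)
The plan is to reduce both parts to the analysis of the single quadratic form
$Q_n = (\bm Y - \bm B\bm\zeta)\trans(\boldsymbol{B\Lambda B}\trans + \bm I_n)^{-1}(\bm Y - \bm B\bm\zeta)$,
since $\hat\sigma_n^2 = Q_n/n$ by \eqref{eq:MLE} and, by \eqref{eq:hierarchical}, the fully Bayes posterior is $\mathrm{IG}(\beta_1 + n/2, \beta_2 + Q_n/2)$, whose center is again $Q_n/n$ up to $O(n^{-1})$. Writing $\bm M = \boldsymbol{B\Lambda B}\trans + \bm I_n$, the first step is to exploit the block structure of $\bm M$: because each row of $\bm B$ has a single nonzero entry, $\boldsymbol{B\Lambda B}\trans$ is block diagonal after grouping observations by interval, the $j$th block being $\lambda_j^2\bm 1\bm 1\trans$ on the $N_j$ indices with $X_i\in I_j$. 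By Sherman--Morrison, the corresponding block of $\bm M^{-1}$ is $\bm I_{N_j} - \{\lambda_j^2/(1+\lambda_j^2 N_j)\}\bm 1\bm 1\trans$, which I would use throughout to make every quantity explicit.

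Second, I would compute $\E_0 Q_n$. With $\bm U = \bm\delta + \bm\varepsilon$, where $\bm\delta = \bm F_0 - \bm B\bm\zeta$ is deterministic given the design and $\E_0\bm\varepsilon\bm\varepsilon\trans = \sigma_0^2\bm I_n$, one gets $\E_0 Q_n = \bm\delta\trans\bm M^{-1}\bm\delta + \sigma_0^2\,\mathrm{tr}(\bm M^{-1})$. Using the block formula, $\mathrm{tr}(\bm M^{-1}) = n - J + \sum_j (1+\lambda_j^2 N_j)^{-1} = n - J + O(J^2/n)$, so the trace term contributes $\sigma_0^2(1 - J/n) + O(J^2/n^2)$ to $\hat\sigma_n^2$, a negative bias of order $J/n$ (here $B_1<\lambda_j<B_2$ and $N_j\asymp n/J$ are used; under (DR) the latter holds with high probability by Lemma~\ref{lem:order counts}). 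In the deterministic part the $\bm\zeta$ contribution cancels within each block, so $\bm\delta\trans\bm M^{-1}\bm\delta = \sum_j\sum_{i\in I_j}(f_0(X_i)-\bar f_{0j})^2 + \sum_j\{N_j\bar\delta_j^2/(1+\lambda_j^2 N_j)\} = n\|f_0 - f_{0J}\|_{2,G_n}^2 + O(J)$, with $f_{0J}$ the within-interval-average step function and $\bar\delta_j$ the block average of $\bm\delta$. Dividing by $n$, the deterministic bias equals the squared $\LL_2(G_n)$-approximation error, controlled by Lemma~\ref{approximation} and vanishing as $J\to\infty$, plus an $O(J/n)$ term.

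Third, I would control the fluctuation $Q_n - \E_0 Q_n$ through the variance of the quadratic form. Writing $Q_n = \bm\varepsilon\trans\bm M^{-1}\bm\varepsilon + 2\bm\delta\trans\bm M^{-1}\bm\varepsilon + \bm\delta\trans\bm M^{-1}\bm\delta$ and using $0\prec\bm M^{-1}\preceq\bm I_n$, one has $\mathrm{tr}(\bm M^{-2})\le\mathrm{tr}(\bm M^{-1})\le n$ and $\|\bm M^{-1}\bm\delta\|\le\|\bm\delta\| = O(\sqrt n)$ by boundedness of $f_0$ and $\bm\zeta$. Hence $\mathrm{Var}_0(\bm\varepsilon\trans\bm M^{-1}\bm\varepsilon)\lesssim\mathrm{tr}(\bm M^{-2})\lesssim n$ and $\mathrm{Var}_0(\bm\delta\trans\bm M^{-1}\bm\varepsilon) = \sigma_0^2\|\bm M^{-1}\bm\delta\|^2\lesssim n$ for sub-Gaussian errors, so Chebyshev gives $Q_n/n - \E_0(Q_n/n) = O_{P_0}(n^{-1/2})$. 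Combining with the expectation computation yields $\hat\sigma_n^2 - \sigma_0^2 = O_{P_0}(\max\{n^{-1/2}, J/n\})$, the deterministic approximation error being of smaller or comparable order, uniformly over $\mathcal{F}_+(K)$ since every bound depends on $f_0$ only through $K$. For the random design I would condition on $\bm X$, run the same argument on the event $A_n$ of Lemma~\ref{lem:order counts}, and take expectations. For part (b), the $\mathrm{IG}(\beta_1+n/2,\beta_2+Q_n/2)$ posterior has mean $Q_n/n + O(n^{-1})$ and variance of order $n^{-1}$, so it concentrates at $\hat\sigma_n^2$ at rate $n^{-1/2}$; together with part (a) this gives the stated contraction rate.

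The main obstacle I anticipate is twofold: obtaining a clean, sharp bound on the fluctuation of $\bm\varepsilon\trans\bm M^{-1}\bm\varepsilon$ for merely sub-Gaussian (not necessarily Gaussian) errors, which needs fourth-moment or Hanson--Wright-type control combined with the operator- and Frobenius-norm bounds $\|\bm M^{-1}\|_{\mathrm{op}}\le1$ and $\mathrm{tr}(\bm M^{-2})\le n$ above; and verifying that every deterministic approximation and degrees-of-freedom correction is genuinely of order $\max\{n^{-1/2}, J/n\}$ (the former resting on the monotone approximation bound of Lemma~\ref{approximation}), with all estimates uniform over $\mathcal{F}_+(K)$.
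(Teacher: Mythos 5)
Your route is essentially the paper's own. The paper likewise reduces everything to $\hat\sigma_n^2=Q_n/n$ from \eqref{eq:MLE}, splits $|\E_0\hat\sigma_n^2-\sigma_0^2|$ into the trace term $n^{-1}\mathrm{tr}(\bm I_n-\bm U)$ plus the two quadratic forms $(\bm F_0-\bm B\bm\theta_{0J})\trans\bm U(\bm F_0-\bm B\bm\theta_{0J})$ and $(\bm B\bm\theta_{0J}-\bm B\bm\zeta)\trans\bm U(\bm B\bm\theta_{0J}-\bm B\bm\zeta)$ as in \eqref{eq:tsigma0bias}, bounds $\mathrm{Var}_0(\hat\sigma_n^2)$ by a multiple of $n^{-1}$, and derives part (b) from part (a) via the inverse-gamma form \eqref{eq:hierarchical} of the marginal posterior. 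The only real difference is that the paper outsources the computations to Proposition~4.1 of Yoo and Ghosal, whereas your Sherman--Morrison block inversion makes them self-contained; your exact identity $\bm\delta\trans\bm M^{-1}\bm\delta=n\|f_0-f_{0J}\|_{2,G_n}^2+O(J)$ is in fact a cleaner version of the paper's three-term bound, and your treatment of the fluctuation (fourth-moment control of $\bm\varepsilon\trans\bm M^{-1}\bm\varepsilon$, which sub-Gaussianity supplies, plus $\mathrm{Var}_0(\bm\delta\trans\bm M^{-1}\bm\varepsilon)\lesssim n$, with the third-moment cross covariance also $O(n)$) is sound and needs no Hanson--Wright machinery.

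One step, however, is wrong as written: the assertion that the deterministic approximation error is ``of smaller or comparable order'' than $\max\{n^{-1/2},J/n\}$. Your own computation gives the bias $\|f_0-f_{0J}\|_{2,G_n}^2+O(J/n)$, and for a general monotone $f_0$ the approximation term is of exact order $J^{-1}$ (take $f_0$ with a single jump in the interior of an interval $I_j$: the best constant on $I_j$ incurs squared $\LL_2(G_n)$-error of order $N_j/n\asymp J^{-1}$), and $J^{-1}\gg\max\{n^{-1/2},J/n\}$ whenever $J\ll\sqrt n$ --- in particular for the choice $J\asymp n^{1/3}$ used in Theorems~\ref{th: contraction rate monotone} and \ref{th: testing 1}. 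So what your argument actually establishes is the rate $\max\{n^{-1/2},J/n,J^{-1}\}$, not the stated $\max\{n^{-1/2},J/n\}$. You should know that the paper's proof lands in exactly the same place: it explicitly records the bias bound as a multiple of $J/n+J^{-1}$, so the omission of $J^{-1}$ from the rate in the lemma statement is a slip in the statement itself, not something your method (or the paper's) can repair. Nothing downstream is harmed: the lemma is invoked only to confine $\sigma$ (or $\hat\sigma_n$) to a shrinking neighborhood $\mathcal{U}_n$ of $\sigma_0$, i.e.\ consistency, which both versions deliver since $J\to\infty$ and $J\ll n$, uniformly over $\mathcal{F}_+(K)$. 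If you state the corrected rate $\max\{n^{-1/2},J/n,J^{-1}\}$, your proof is complete and, if anything, more explicit than the paper's.
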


\begin{proof}%[Proof of Lemma~\ref{th:tsigma2con}]
	(a) Let $f_0\in\mathcal{F}_+$. We first show that there exists $\boldsymbol{\theta}_{0J} = (\theta_{01}, \ldots,\theta_{0J})$ such that  $n^{-1}\|\boldsymbol{F}_0 - \boldsymbol{B}\boldsymbol{\theta}_{0J}\|^2 \lesssim J^{-1}$ for deterministic $\boldsymbol{X}$,  and $n^{-1}\mathrm{E}_G\|\boldsymbol{F}_0 - \boldsymbol{B}\boldsymbol{\theta}_{0J}\|^2 \lesssim J^{-1}$ for random $\boldsymbol{X}$.
	
	On a set with $\min\{N_j: 1\leq j\leq J\} >0$, let $\theta_{0j}=N_j^{-1}\sum_{i:X_i\in I_j}f_0(X_i)$. Using the monotonicity of $f_0$, we write $n^{-1}\|\boldsymbol{F}_0 - \boldsymbol{B}\boldsymbol{\theta}_{0J}\|^2$ as 
	\begin{align}
	\frac1n\sum_{j=1}^J\sum_{i:X_i\in I_j}(f_0(X_i)-\theta_{0j})^2 
	\leq \frac1n\sum_{j=1}^J\sum_{i:X_i\in I_j}(f_0(j/J)-f_0((j-1)/J))^2 \nonumber \\
	= \sum_{j=1}^J \frac{N_j}{n}\left(f_0(j/J)-f_0((j-1)/J)\right)^2. \label{eq: theta0eq1}
	\end{align}
	For deterministic ${X}$, by Condition (DD) and the monotonicity of $f_0$, \eqref{eq: theta0eq1} is bounded by 
	\begin{equation} 
	\label{label9}
	\max_{1\le j\le J}\frac{N_j}{n} \sum_{j=1}^J [f_0(j/J)-f_0((j-1)/J)]^2 
	\le \max_{1\le j\le J}\frac{N_j}{n}(f_0(1)-f_0(0))^2\to 0.
	\end{equation}
	For random ${X}$, using the fact that $N_j\sim \mathrm{Bin}(n;G(I_j))$, the expectation of \eqref{eq: theta0eq1} under $G$ equals to 
	$\sum_{j=1}^JG(I_j)\left(f_0(j/J)-f_0((j-1)/J)\right)^2$, which, in view of  Condition (DR), has the bound $\max_{1\le j\le J} G(I_j) (f_0(1)-f_0(0))^2\to 0$. 
	
	For the rest of the proof, we assume that ${X}$ is fixed, satisfying Condition (DD); the random case can be dealt with similarly, by taking expectation with respect to $G$ and using Condition (DR). We imitate the proof of Proposition 4.1 (a) of Yoo and Ghosal \cite{williamSupNormContraction} but assuming that $f_0$ is monotone instead of smooth. Define $\boldsymbol{U}=(\boldsymbol{B\Lambda B}^T+\boldsymbol{I}_n)^{-1}$. We write $$|\mathrm{E}_0(\hat{\sigma}_n^2)-\sigma_0^2|=|n^{-1}\sigma_0^2\mathrm{tr}(\boldsymbol{U})-\sigma_0^2|+n^{-1}(\boldsymbol{F}_0-\boldsymbol{B\zeta})^T
	\boldsymbol{U}(\boldsymbol{F}_0-\boldsymbol{B\zeta})$$ and bound it by a constant multiple of 
	\begin{align} 
	\lefteqn{n^{-1}[\mathrm{tr}(\boldsymbol{I}_n-\boldsymbol{U})+(\boldsymbol{F}_0-\boldsymbol{B\theta}_{0J})^T
		\boldsymbol{U}(\boldsymbol{F}_0-\boldsymbol{B\theta}_{0J})} \nonumber \\ 
	&& +(\boldsymbol{B\theta}_{0J}-\boldsymbol{B\zeta})^T
	\boldsymbol{U}(\boldsymbol{B\theta}_{0J}-\boldsymbol{B\zeta})]
	\label{eq:tsigma0bias}.
	\end{align}
	Among these terms, only the middle term arising out of the approximation of the true function by step functions, is different --- the other two terms are bounded  by $J/n$ considering step functions as B-splines of order 1 in one dimension. The second term can also be bounded by a multiple of $J^{-1}$ in the same way Yoo and Ghosal \cite{williamSupNormContraction} using the $\LL_2$-approximation rate $J^{-1/2}$ for monotone function, leading the upper bound a multiple of $J/n+J^{-1}$ for the expression in \eqref{eq:tsigma0bias}. 
	
	To complete the proof of part (a), we bound $\mathrm{Var}_0(\hat{\sigma}_n^2)$ by a multiple of $n^{-1}$. Again, we can follow the same steps in the proof of Proposition 4.1 (a) of Yoo and Ghosal \cite{williamSupNormContraction} with the approximate rate for a smooth function replaced by the approximation rate $n^{-1}$ for a monotone function. We also observe that the bounds obtained in the proof are uniform over $f_0\in \mathcal{F}_+(K)$ for any $K>0$. 
	
	Given part (a), the proof of part (b) follows exactly as in the proof of Proposition 4.1 (a) of Yoo and Ghosal \cite{williamSupNormContraction}. 
\end{proof}

\begin{lemma}
	\label{approximation}
	Let $p\ge 1$ and $K>0$. Then for every $f\in\mathcal{F}_+(K)$ and $J>1$, there exist $ \theta_1\le \cdots\le \theta_J$ from $[-K,K]$ such that the following assertions hold. 
	\begin{enumerate} 
		\item [(a)] For any partition intervals $I_1,\ldots,I_J$ and probability measure $H$ satisfying $H(I_j)\le M/J$, with $f_{J}=\sum_{j=1}^J \theta_j \Ind_{I_j} \in\mathcal{F}_+(K)$ we have that $\int |f_0-f_{0J}|^p dH \le MK^p/J$.
		\item [(b)] For any probability measure $H$ and $1\le p<\infty$, there exist knots $0=\xi_{ 0}<\xi_{ 1}<\cdots < \xi_{J-1}<\xi_{J}=1$ from the topological support of $H$ such that for any $f \in\mathcal{F}_+(K)$, the exits a function of the form $f_{J}=\sum_{j=1}^J \theta_j \Ind_{I_j} \in\mathcal{F}_+(K)$ satisfying $\int |f_0-f_{0J}|^p dH \le K^p/J^p$, where  $I_j=[\xi_{j-1},\xi_j)$, $j=1,\ldots,J-1$, $I_J=[\xi_{J-1},\xi_J]$. 
	\end{enumerate}
\end{lemma}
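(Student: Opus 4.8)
The plan is to prove both parts by the same elementary device --- bounding the error of a piecewise-constant approximation on each interval by the oscillation of $f$ there --- the two parts differing only in how the breakpoints are chosen. Throughout, write $V=f(1)-f(0)$ for the total variation of $f$, which satisfies $V\le 2K$ since $f\in\mathcal{F}_+(K)$, and for an interval $I$ write $\mathrm{osc}(f;I)=\sup_{x\in I}f(x)-\inf_{x\in I}f(x)$. The three facts I would use repeatedly are: (i) if $\theta\in[\inf_I f,\sup_I f]$ then $|f(x)-\theta|\le \mathrm{osc}(f;I)$ for all $x\in I$; (ii) for a contiguous partition $I_1,\ldots,I_J$ of $[0,1]$ and monotone $f$ the oscillations telescope along the increasing chain of infima and suprema, giving $\sum_{j=1}^J\mathrm{osc}(f;I_j)\le V$; and (iii) choosing each $\theta_j\in[\inf_{I_j}f,\sup_{I_j}f]$ automatically makes $\theta_1\le\cdots\le\theta_J$ with $|\theta_j|\le K$, so that $f_J=\sum_j\theta_j\Ind_{I_j}\in\mathcal{F}_+(K)$.

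For part (a) I would take the partition $I_1,\ldots,I_J$ as given, set $\theta_j=\inf_{I_j}f$, and estimate $\int|f-f_J|^p\,dH=\sum_j\int_{I_j}|f-\theta_j|^p\,dH\le\sum_j\mathrm{osc}(f;I_j)^p\,H(I_j)$. Applying $H(I_j)\le M/J$ and then the inequality $\sum_j a_j^p\le(\sum_j a_j)^p$, valid for $p\ge 1$ and nonnegative $a_j$ (equivalently $\|\cdot\|_{\ell_p}\le\|\cdot\|_{\ell_1}$), bounds the right side by $(M/J)(\sum_j\mathrm{osc}(f;I_j))^p\le(M/J)V^p\le 2^pMK^p/J$. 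Absorbing the constant $2^p$ yields the stated $MK^p/J$, uniformly over $f\in\mathcal{F}_+(K)$. This is the sub-optimal $\LL_p$-rate $J^{-1/p}$, and the loss is incurred precisely at the step replacing the $\ell_p$-norm of the oscillations by their $\ell_1$-norm.

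For part (b) the improvement to $K^p/J^p$ (the optimal $\LL_p$-rate $J^{-1}$) comes from equalizing the oscillations rather than the $H$-mass, i.e.\ partitioning the \emph{range} instead of the domain. I would set $y_k=f(0)+kV/J$ for $k=0,\ldots,J$ and take the generalized-inverse knots $\xi_k=\inf\{x\in[0,1]:f(x)\ge y_k\}$, so that on $I_k=[\xi_{k-1},\xi_k)$ one has $f\in[y_{k-1},y_k)$ and hence $\mathrm{osc}(f;I_k)\le V/J$. Choosing $\theta_k=y_{k-1}$ gives $\int|f-f_J|^p\,dH\le(V/J)^p\sum_k H(I_k)=(V/J)^p\le 2^pK^p/J^p$, again absorbing the constant, with $\theta_k$ increasing and lying in $[f(0),f(1)]\subseteq[-K,K]$ so that $f_J\in\mathcal{F}_+(K)$.

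The main obstacle, and the only place needing genuine care, is the requirement that the knots lie in the topological support of $H$ and form a strictly increasing sequence of exactly $J$ points. The generalized-inverse $\xi_k$ need neither fall in $\mathrm{supp}(H)$ nor be distinct when $f$ has flat stretches. I would resolve this by sliding each $\xi_k$ to the nearest point of $\mathrm{supp}(H)$: any maximal open gap in the complement of the support carries zero $H$-mass, so moving a knot across such a gap leaves every integral $\int_{I_k}|f-\theta_k|^p\,dH$ unchanged and preserves the oscillation bound, while surplus coincident knots (arising only on flat pieces of $f$, which contribute no error) may be reinserted arbitrarily inside the support. For the discrete case $H=G_n$ relevant to the Type~2 prior this simply amounts to selecting the knots among the observations $X_1,\ldots,X_n$, reducing part (b) to equal-range partitioning of the finite ordered values $f(X_{(1)})\le\cdots\le f(X_{(n)})$, for which the same oscillation estimate applies verbatim.
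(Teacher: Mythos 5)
Your proof is correct, and while part (a) is essentially the paper's own argument (oscillation per interval plus the inequality $\sum_k a_k^p\le(\sum_k a_k)^p$ — if anything your telescoping of oscillations handles the arbitrary partition in the statement more faithfully than the paper's writeup, which works with the values $f(j/J)$ as if the knots were equidistant), part (b) takes a genuinely different route. The paper does not construct anything: it invokes the bracketing construction in the proof of Theorem~2.7.5 of van der Vaart and Wellner, observes that a lower bracket of an $\epsilon$-bracketing with $J(\epsilon)\lesssim\epsilon^{-1}$ knots is itself a monotone step function within $\epsilon$ of $f$ in $\LL_p(H)$, reverses the roles of $\epsilon$ and $J$, and handles the support requirement in one line by the quantile transform $\xi_j\mapsto H^{-1}(\xi_j)$ (their construction assumes $H$ uniform). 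Your equal-range partition $y_k=f(0)+kV/J$ with generalized-inverse knots is the elementary argument underlying that entropy bound, made explicit and self-contained; it yields transparent constants (both proofs actually deliver $(2K)^p$ rather than $K^p$, which is immaterial since the lemma is only used up to constants), and your gap-sliding repair is a more careful treatment of the support constraint than the paper's quantile transform — e.g.\ it explicitly confronts ties among knots, which the quantile transform can also produce on flat stretches of $H^{-1}$ and which the paper silently ignores; note both proofs tacitly need $\mathrm{supp}(H)$ to contain at least $J-1$ points for the strict ordering $\xi_1<\cdots<\xi_{J-1}$ to be achievable. Two small points of care in your argument: when you slide a knot across a gap $(a,b)$ in the complement of the support, slide it to the \emph{right} endpoint $b$ so that, under the convention $I_j=[\xi_{j-1},\xi_j)$, no $H$-atom at a gap endpoint is reassigned (sliding left can move an atom at $a$ into the adjacent interval, degrading the pointwise bound to $2V/J$ there — harmless, but worth saying); and observe that both your construction and the paper's produce knots depending on $f$, so the ``for any $f$'' phrasing in the statement must be read with the knots chosen after $f$, which is exactly how the lemma is applied in Theorem~\ref{th:l2 contraction}. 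Your closing remark that for $H=G_n$ the construction reduces to selecting knots among the order statistics is precisely what the Type~2 prior needs, so your version arguably serves the application more directly than the citation-based proof.
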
 

\begin{proof}
	We bound the discrepancy $\int |f-f_J|^p dH= \sum_{j=1}^J \int_{I_j} |f-f_J|^p dH$ by 
	\begin{align*}
	\sum_{j=1}^J H(I_j) |f(j/J)-f((j-1)/J)|^p 
	\le  MJ^{-1}   \sum_{j=1}^J  |f(j/J)-f((j-1)/J)|^p,
	\end{align*} 
	which is bounded by 
	$ |f(1)-f(0)|^p$ 
	by the estimate $\sum a_k^p\le (\sum a_k)^p$ for positive numbers $a_1,\ldots,a_k$ and $p\ge 1$. 
	
	The proof of part (b) is essentially contained in the proof of Theorem 2.7.5 of van der Vaart and Wellner  \cite{empirical}, although their theorem is about a bound for the bracketing or metric entropy. Implicit in their construction is that, given $\epsilon>0$, there exists a $J=J(\epsilon)\lesssim \epsilon^{-1}$, $0\le \xi_1<\cdots<\xi_{J-1}\le 1$ and $\theta_1,\ldots,\theta_J$ such that $f_J=\sum_{j=1}^J \theta_j \Ind_{I_j}$ satisfies $\|f-f_J\|_{p,H}<\epsilon$, where $I_1,\ldots,I_J$ form an interval partition of $[0,1]$ with knots $0=\xi_0< \xi_1<\cdots<\xi_{J-1}\le \xi_J=1$. For instance, one of the lower brackets in their construction of an $\epsilon$-bracketing will satisfy the approximation property. The role of $\epsilon$ and $J$ can be reversed, in that, given $J$, we can first obtain $\epsilon>0$ such that the corresponding $J(\epsilon)$ is within $J$. 
	
	Finally, we need to conclude that the knot points $\xi_1<\cdots<\xi_{J-1}$ can be chosen from the support of $H$. The construction in van der Vaart and Wellner  \cite{empirical} assumed, without loss of generality, that $H$ is uniform. For a general $H$, the quantile transform is applied, transforming the $j$th knot $\xi_j$ to $H^{-1}(\xi_j)$, which belongs to the support of $H$. 
\end{proof}

%\newpage
%	REFERENCE LIST
%----------------------------------------------------------------------------------------
\bibliographystyle{imsart-number}
\bibliography{mybibfile}

%----------------------------------------------------------------------------------------

\end{document}